\documentclass[journal]{IEEEtran}

\ifCLASSINFOpdf
\else
\fi

\usepackage{amsthm}
\usepackage{amssymb}
\usepackage{amsmath}
\usepackage{times}
\usepackage{graphicx}
\usepackage{graphics}
\usepackage{subfigure}
\usepackage{multirow}
\usepackage{caption}
\usepackage{makecell}
\usepackage{cite}
\usepackage{color}
\usepackage{booktabs}
\usepackage{epstopdf}
\usepackage{cite}
\usepackage{bm}
\usepackage[noend]{algpseudocode}
\usepackage{algorithmicx,algorithm}

\usepackage{color}
\newtheorem{theorem}    {Theorem}
\newtheorem{definition} {Definition}
\newtheorem{corollary}  {Corollary}
\newtheorem{lemma}      {Lemma}
\newtheorem{remark}     {Remark}


\begin{document}

\title{
Fast consensus of high-order multi-agent systems}

\author{Jiahao~Dai,
        Jing-Wen~Yi$^*~\IEEEmembership{Member,~IEEE,}$,
        Li~Chai,~\IEEEmembership{Member,~IEEE,}

\thanks{This work was supported by the National Natural Science Foundation of China (62173259, 62176192, 61625305 and 61701355).}
\thanks{Jiahao Dai and Jing-Wen Yi are with the Engineering Research Center of Metallurgical Automation and Measurement Technology, Wuhan University of Science and Technology, Wuhan 430081, China. e-mail: daijiahao@wust.edu.cn; yijingwen@wust.edu.cn.}
\thanks{Li Chai is with the College of Control Science and Engineering, Zhejiang University, Hangzhou 310027, China. e-mail: chaili@zju.edu.cn.}}

\markboth{Journal of \LaTeX\ Class Files,~Vol.~xx, No.~x, August~202x}%
{Shell \MakeLowercase{\textit{et al.}}: Bare Demo of IEEEtran.cls for IEEE Journals}

\maketitle

\begin{abstract}
 In this paper, the fast consensus problem of high-order multi-agent systems under undirected topologies is considered.
   The direct link between the consensus convergence rate and the control gains is established.
   An accelerated consensus algorithm based on gradient descent is proposed to optimize the convergence rate.
   By applying the Routh-Hurwitz stability criterion, the lower bound on the convergence rate is derived, and
   explicit control gains are derived as the necessary condition to achieve the optimal convergence rate.
   Moreover,
   a protocol with time-varying control gains is designed to achieve the finite-time consensus.
   Explicit formulas for the time-varying control gains and the final consensus state are given.
   Numerical examples and simulation results are presented to illustrate the obtained theoretical results.
\end{abstract}

\begin{IEEEkeywords}
Multi-agent systems; high-order; fast consensus; convergence rate; finite-time consensus.
\end{IEEEkeywords}

\IEEEpeerreviewmaketitle

\section{Introduction}

Consensus is a fundamental problem in distributed coordination, which has been extensively studied in \cite{2013An,2016Recent,dorri2018multi,chen2019control}.
The main purpose of consensus is to design a control protocol, which use the local information between an agent and its neighbors, such that the states of all agents can reach a common value over time.

The convergence rate is an important indicator to evaluate the consensus performance.
There are many methods to accelerate the convergence rate, which can be roughly summarized as:
optimizing the weight matrix \cite{XIAO200465,4627467,7389373},
using the time-varying control \cite{6338354,KIBANGOU201419,6876198}, and
introducing the agent's memory \cite{5411823,8716798,pasolini2020exploiting}.
Recently, Yi et al. \cite{2019Average} gave an explicit formula for the optimal convergence rate of first-order MASs from the perspective of graph signal frequency domain filtering, and Dai et al. \cite{9763023} proposed a general control protocol with memory to accelerate the consensus of first-order MASs.

Most of the above methods to optimize the convergence rate are considered in the first-order system.
However, a broad class of systems have multiple degrees of freedom in practical applications, where the input-output relationship needs to be illustrated by higher-order dynamics \cite{5708233,2011Average,2011Distributed,7054482,8272377}.
Then some researchers explored the convergence rate of higher-order MASs.
Li et al. \cite{LI20111706} studied a consensus algorithm for MASs with double-integrator dynamics, and proved that the finite-time consensus can be achieved by using Lyapunov stability theory.
Under assumptions that the system matrix is controllable and the product of the unstable eigenvalues of the open-loop system matrix has a upper bound, You et al.  \cite{2011Network} provided a lower bound of the optimal convergence rate for high-order discrete-time MASs.
Eichler et al. \cite{2014Closed} proposed a protocol for the consensus of MASs with discrete-time double-integrator dynamics, and derived the optimal control gain by minimizing the largest eigenvalue modulus of the closed-loop system matrix.
Parlangeli et al. \cite{2018Accelerating} proposed a control protocol in high-order continuous-time leader-follower networks, and indicated that
the convergence can be achieved arbitrarily fast by allocating all the eigenvalues of the closed-loop system matrix.

In this paper, the fast consensus problem of high-order MASs is considered.
Control protocols with constant control gains and time-varying control gains are used to achieve the accelerated asymptotic consensus and the finite-time consensus, respectively.
The main contributions of this paper are summarized as follows.
\\(i) The necessary and sufficient condition for high-order MASs to achieve asymptotic consensus is given.
The direct link between the consensus convergence rate and the control gains is established.
\\(ii) The accelerated asymptotic consensus problem is transformed into an optimization problem of the convergence rate, and an accelerated consensus algorithm based on gradient descent is proposed to solve this problem.
By applying the Routh-Hurwitz stability criterion, the lower bound on the convergence rate is given, and
explicit control gains are derived as the necessary condition to achieve the optimal convergence rate.
\\(iii) The explicit formula of time-varying control gains to achieve the finite-time consensus is derived by applying the Cayley-Hamilton theorem.
It is shown that the step to achieve consensus is determined by the system's order and the number of distinct non-zero eigenvalues of Laplacian.

The rest of this paper is organized as follows.
In Section II, the problem statement are presented.
Section III proposes some consensus conditions, designs an accelerated consensus algorithm to accelerate consensus, and derives a lower bound on the convergence rate.
Section IV introduces a time-varying control protocol to achieve the finite-time consensus, and gives explicit formulas for the time-varying control gains.
In Section V, numerical examples are given to verify the theoretical analysis.
Finally, Section VI concludes this paper.

\textbf{Notations:} The notations used in this paper are standard.
The notation $diag\{\,\cdots\}$ denotes a block-diagonal matrix.
$\mathbb{R}^{n}$ and $\mathbb{R}^{m\times n}$ are the sets of column vectors of dimension ${n}$ and matrices of dimension ${m \times n}$ with real
elements, respectively.
${\left\|  \,\cdot\,  \right\|_2}$ denotes the Euclidean norm.
The symbol $\otimes$ stands for Kronecker product.
The symbol $C_p^q=\frac{{p!}}{{q!(p - q)!}}$ denotes the number of $q$-combinations from a given set of $p$ elements.
Without special explanation, $\bm{0}$ and $I$ represent the zero matrix and identity matrix with appropriate dimensions, and $\bm{1}$ denotes the vector of all ones.

\section{Preliminaries}

\subsection{Graph Theory}

The interactions among agents are modeled as an undirected graph $\mathcal{G\!=\!(V,E,A)}$, where $\mathcal{V}\!=\!\{\text{v}_1,\text{v}_2,\cdots,\text{v}_N\}$ presents a set of agents or nodes,
$\mathcal{E \!\subseteq\! V\times V}$ presents a set of edges,
and $\mathcal{A}\!=\![a_{ij}]\!\in\! \mathbb{R}^{N\!\times\! N}$ presents the weighted adjacency matrix.
The adjacency element $a_{ij}\!=\!a_{ji}\!>\!0$ if the edge between node $i$ and $j$ satisfies $e_{ij}\in\mathcal{E}$.
Denote the set of neighbors of node as ${\mathcal{N}_i} \!= \!\left\{ {{\text{v}_j} \in \mathcal{V}:({\text{v}_i},{\text{v}_j}) \in \mathcal{E} } \right\}$.
Define the Laplacian matrix of $\mathcal{G}$ as $\mathcal{L=D-A}$,
where $\mathcal{D}\!=\!diag\{d_{1},\cdots,d_{N}\}$ and ${d_i}\!= \!\sum\nolimits_{j = 1}^N {{a_{ij}}}$.
For a connected graph, all the eigenvalues of $\mathcal{L}$ are real in an ascending order as $0 \!=\! {\lambda _1} \!<\! {\lambda _2} \!\le\!  \cdots  \!\le\! {\lambda _N}$.

\begin{lemma} \cite{2004Consensus}
For any connected undirected graph $\mathcal{G}$, its Laplacian matrix has the following properties.
\\(i) $\mathcal{L}$ has the spectral decomposition ${\mathcal{L}=V\Lambda V^{T}}$, where $\Lambda\!=\!diag\{\lambda_{1},\lambda_{2},\cdots,\lambda_{N}\}$ and $V\!=\![\bm{v}_{1},\bm{v}_{2},\cdots,\bm{v}_{N}]\!\in\!\mathbb{R}^{N\times N}$.
\\(ii) Zero is a single eigenvalue of $\mathcal{L}$, and the corresponding eigenvector is $\bm{v}_{1}\!=\!\frac{1}{\sqrt{N}}\bm{1}$.
\end{lemma}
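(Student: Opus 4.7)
The plan is to handle (i) by a direct appeal to the real spectral theorem and (ii) by combining the explicit observation that $\bm{1}$ lies in the kernel of $\mathcal{L}$ with a standard quadratic-form argument that converts connectedness of $\mathcal{G}$ into simplicity of the zero eigenvalue.

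For (i), my first step would be to note that $\mathcal{L}=\mathcal{D}-\mathcal{A}$ is real symmetric: $\mathcal{A}$ is symmetric because $\mathcal{G}$ is undirected (so $a_{ij}=a_{ji}$), and $\mathcal{D}$ is diagonal. The real spectral theorem then yields an orthogonal eigendecomposition $\mathcal{L}=V\Lambda V^{T}$ with $\Lambda$ diagonal and $V$ orthogonal. Arranging the eigenvalues in ascending order and collecting matching orthonormal eigenvectors as columns of $V$ produces the decomposition in the stated form.

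For (ii), I would proceed in two steps. First, a direct row-sum computation gives $\mathcal{L}\bm{1}=\bm{0}$, since the $i$-th entry of $\mathcal{L}\bm{1}$ is $d_i-\sum_{j=1}^{N}a_{ij}=0$ by the definition of $d_i$; normalizing yields $\bm{v}_{1}=\frac{1}{\sqrt{N}}\bm{1}$. Second, to show the eigenvalue $0$ is simple, I would invoke the identity $\bm{x}^{T}\mathcal{L}\bm{x}=\frac{1}{2}\sum_{i,j=1}^{N}a_{ij}(x_i-x_j)^{2}\ge 0$, so $\mathcal{L}$ is positive semidefinite and any $\bm{x}\in\ker\mathcal{L}$ forces $x_i=x_j$ whenever $a_{ij}>0$.

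The single conceptual step worth flagging is the passage from these local equalities across edges to global constancy of $\bm{x}$: connectedness of $\mathcal{G}$ supplies a positive-weight path between any two nodes, so the equality $x_i=x_j$ propagates along the path and $\bm{x}$ must be a scalar multiple of $\bm{1}$. Hence $\ker\mathcal{L}=\mathrm{span}\{\bm{1}\}$, which combined with the ordering from (i) gives $\lambda_{1}=0<\lambda_{2}\le\cdots\le\lambda_{N}$. Without connectedness the null-space dimension would equal the number of connected components, and this is precisely the obstacle that the path-chaining argument overcomes; everything else is essentially bookkeeping from the spectral theorem.
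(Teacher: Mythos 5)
Your proof is correct and complete: symmetry of $\mathcal{L}$ gives the orthogonal eigendecomposition, the zero row sums give $\mathcal{L}\bm{1}=\bm{0}$, and the quadratic form $\bm{x}^{T}\mathcal{L}\bm{x}=\frac{1}{2}\sum_{i,j}a_{ij}(x_i-x_j)^{2}$ together with the path-chaining argument from connectedness establishes that the kernel is exactly $\mathrm{span}\{\bm{1}\}$. The paper offers no proof of its own for this lemma---it is cited directly from the reference as a standard fact---and your argument is precisely the classical one used there, so there is nothing to compare beyond noting that you have correctly identified connectedness as the essential hypothesis for simplicity of the zero eigenvalue.
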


\subsection{Problem Formulation}

Agents might only be able to interact with their neighbors intermittently rather than continuously because digital signals communicate in discrete time.
A discrete-time high-order MAS containing $N$ agents with order $n \geq 1$ is considered as follows.
\begin{equation}
\begin{array}{*{20}{l}}
{\begin{array}{*{20}{l}}
{x_i^{(1)}(k + 1) = x_i^{(1)}(k) + x_i^{(2)}(k) \cdot \tau ,}\\
{x_i^{(2)}(k + 1) = x_i^{(2)}(k) + x_i^{(3)}(k) \cdot \tau ,}\\
 \;\;\;\;\;\;\;\;\;\;\;\;\;\;\;\;\;\;\;\vdots \\
{x_i^{(n)}(k + 1) = x_i^{(n)}(k) + {u_i}(k) \cdot \tau ,}
\end{array}}&\begin{array}{l}
i = 1,2, \ldots ,N,\\
l = 1,2, \ldots ,n.
\end{array}
\end{array}
\end{equation}
where $x_i^{(l)}(k)\in \mathbb{R}$ represents the $l$-order state of the agent $i$, $u_i(k)\in\mathbb{R}$ is the control input, and $\tau\in\mathbb{R}^{+}$ denotes the sampling period.

Let $\bm{x}_{i}(k)=[x_{i}^{(1)}(k),\cdots,x_{i}^{(n)}(k)]^T$ and rewrite system (1) into a matrix form
\begin{equation}
\begin{array}{*{20}{c}}
  {{{\bm{x}}_i}(k + 1) = A{\bm{x}_i}(k) + B{u_i}(k),}&{i = }
\end{array}1,2, \ldots ,N,
\end{equation}
where
\begin{equation}
A = \left[ {\begin{array}{*{20}{c}}
{\;1\; }&{\; \tau \;} &{\;\;}&{\;\;}\\
{\;\;}&{\; 1\; }& {\; \ddots \;} &{\;\;}\\
{\;\;}&{\;\;}& { \;\ddots \;} &{\; \tau \;} \\
{\;\;}&{\;\;}&{\;\;}&{\; 1 \;}
\end{array}} \right],B = \left[ {\begin{array}{*{20}{c}}
0\\
 \vdots \\
0\\
\tau
\end{array}} \right].
\end{equation}
\begin{remark}
Ma et al. \cite{2010Necessary} studied the consensus problem of high-order MASs, indicating that the state of each agent converges to zero without taking any control when the open-loop system is stable.
It means that studying the consensus of open-loop stable systems is of little significance.
For an unstable open-loop system, it is usually necessary to make some assumptions to achieve consensus.
However, these assumptions make the conclusions obtained conservative.
For example, References \cite{2011Network,LI2018144} limit the range of eigenratio $\lambda_2 / \lambda_N$.
In fact, for an unstable open-loop system,
each agent can use a local controller  ${u_i}(k) = K_i{x_i}(k)$ for pole-placement,
and make the open-loop system marginally stable.
Then the neighbor information can be utilized to achieve consensus.
Therefore, the marginally stable open-loop system  considered in \cite{2011Distributed,5708233,7054482} and this paper is not loss of generality.
Instead, we think it is more suitable for practical applications.
\end{remark}

\begin{definition}
Consider the high-order MAS (1) with arbitrary initial value.
\\(i) Consensus is said to be reached asymptotically if
\[\begin{array}{*{20}{l}}
{\mathop {\lim }\limits_{k \to \infty } \left[ {x_i^{(l)}(k) - x_j^{(l)}(k)} \right] = 0},\,\,\\i,j = 1,2, \cdots ,N ,\,\,l = 1,2, \cdots ,n.
\end{array}\]
\\(ii) Consensus is said to be reached at step $\rm{T}$ if
\[\begin{array}{*{20}{c}}
{x_i^{(l)}(k) - x_j^{(l)}(k)=0},\,\,i,j = 1,2, \cdots ,N ,\,\,l = 1,2, \cdots ,n
\end{array}\]
holds for any $k\geq \rm{T}$.
\end{definition}

This paper aims to design control protocols and corresponding control gains to achieve the accelerated asymptotic consensus and the finite-time consensus.

\section{Accelerated asymptotic consensus by a time-invariant control protocol}

In this section, the accelerated asymptotic consensus problem of high-order MASs is studied.

Consider the following time-invariant control protocol
\begin{equation}
{u_i}(k) = K\sum\limits_{j \in {\mathcal{N}_i}} {{a_{ij}}({\bm{x}_j}(k) - {\bm{x}_i}(k))},
\end{equation}
where $K = [{K_1},{K _2}, \cdots ,{K _n}]\in \mathbb{R}^{1\times n}$ denotes the control gain, and ${\bm{x}_i}(k) = {[x_i^{(1)}(k), \cdots ,x_i^{(n)}(k)]^T} $.
Denote $\bm{x}(k)=[\bm{x}_{1}(k)^T,\bm{x}_{2}(k)^T,\cdots,\bm{x}_{N}(k)^T]^T$.
The system (2) can be written as
\begin{equation}
\bm{x}(k + 1) = ({I_N} \otimes A - \mathcal{L} \otimes BK)\bm{x}(k).
\end{equation}

Let $H({\lambda _i},K) = A \!-\! {\lambda _i}BK $.
According to ${\mathcal{L}=V\Lambda V^{T}}$, we have
\begin{equation}
\begin{aligned}
\bm{x}(k) =& (V \otimes {I_n})diag\{ A,H({\lambda _2},K), \ldots ,H({\lambda _N},K)\}
\\&({V^T} \otimes {I_n})\bm{x}(k - 1)
\\=& \frac{1}{N}(\bm{1}_N \otimes {I_n}){A^{k}}({\bm{1}^T_N} \otimes {I_n})\bm{x}(0)
\\&+\sum\limits_{i = 2}^N {({\bm{v_i}} \otimes {I_n}){H^k}({\lambda _i},K)(\bm{v_i}^T \otimes {I_n})x(0)} .
\end{aligned}
\end{equation}
Note that $\frac{1}{N}(\bm{1}_N \otimes {I_n}){A^{k}}({\bm{1}^T_N} \otimes {I_n})\bm{x}(0)$ in (6) is the part to achieve consensus.
We need to design the control gain $K$ so that
\[\mathop {\lim }\limits_{k \to \infty } \sum\limits_{i = 2}^N {(\bm{v_i} \otimes {I_n}){H^k}({\lambda _i},K)(\bm{v_i}^T \otimes {I_n})x(0)}  = 0.\]
The convergence rate is determined by the eigenvalue of $H({\lambda _i},K)$ with the largest modulus.
Thus, the consensus convergence rate can be defined as \cite{2011Network}
\begin{equation}
{r} = r(K)= \mathop {\max }\limits_{{\lambda _i} \in \left\{ {{\lambda _2}, \ldots ,{\lambda _N}} \right\}} \rho \left( {H({\lambda _i},K)} \right),
\end{equation}
where $\rho \left(  \cdot  \right)$ denotes spectral radius.

\begin{remark}
Denote \[\bm{e}(k)=\bm{x}(k)-\frac{1}{N}({\bm{1}_N}\otimes{I_n}){A^k}(\bm{1}_N^{T}\otimes{I_n})\bm{x}(0).\]
According to equation (6), we have $\mathop {\lim }\limits_{k \to \infty } {\left\| {\bm{e}(k)} \right\|_2} \sim {\rm{O}}({r^k})$.
Note that time $t=k\tau$ at step $k$.
Then $\mathop {\lim }\limits_{t \to \infty } {\left\| {\bm{e}(t)} \right\|_2} \sim {\rm{O}}({r^{t/\tau}})$.
A smaller $r$ or $\tau$ can get faster convergence of consensus.
\end{remark}

In this section, our goal is to design the control gain $K$ to make the convergence rate $r$ as small as possible.

\subsection{Conditions for consensus}

In this subsection, the necessary and sufficient condition for higher-order MASs to achieve asymptotic consensus is proposed.

\begin{lemma} (Schur Complement \cite{boyd2004convex}) Given a matrix \[M = \left[ {\begin{array}{*{20}{c}}M_1&M_2\\M_3&M_4\end{array}}\right],\] with nonsigular $M_1\in {\mathbb{R}^{\mu \times \mu}}$, $M_2 \in {\mathbb{R}^{N \times \mu}}$, $M_3 \in {\mathbb{R}^{\mu \times N}}$, and $M_4 \in {\mathbb{R}^{N \times N}}$. Then
$\det M = \det M_1 \cdot \det (M_4 - M_3{M_1^{ - 1}}M_2)$.
\end{lemma}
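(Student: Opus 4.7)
The plan is to prove the Schur complement identity by exhibiting an explicit block LU-type factorization of $M$ and then exploiting the multiplicativity of the determinant. Since $M_1$ is assumed nonsingular, I would write
\[
M = \begin{bmatrix} I & 0 \\ M_3 M_1^{-1} & I \end{bmatrix} \begin{bmatrix} M_1 & M_2 \\ 0 & M_4 - M_3 M_1^{-1} M_2 \end{bmatrix},
\]
where the identity blocks are taken of the appropriate sizes to match the partitioning of $M$. Verifying this factorization is purely a mechanical block multiplication: the top row reproduces $[M_1,\,M_2]$, the bottom-left block becomes $M_3 M_1^{-1}\cdot M_1 = M_3$, and the bottom-right block collapses to $M_3 M_1^{-1} M_2 + (M_4 - M_3 M_1^{-1} M_2) = M_4$, so the product indeed equals $M$.

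Next, I would take determinants on both sides. The first factor is block lower triangular with identity diagonal blocks, so its determinant is $1$. The second factor is block upper triangular, so its determinant equals the product of the determinants of its diagonal blocks, namely $\det M_1 \cdot \det(M_4 - M_3 M_1^{-1} M_2)$. Combining these via $\det(XY) = \det X \cdot \det Y$ yields the claimed identity $\det M = \det M_1 \cdot \det(M_4 - M_3 M_1^{-1} M_2)$.

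The only step that is not entirely immediate is the fact that the determinant of a block triangular matrix is the product of the determinants of its diagonal blocks. For the unit lower-triangular factor this is trivial; for the upper-triangular factor it can be justified by an elementary Laplace expansion along the first $\mu$ columns, or by reducing to the scalar triangular case with row/column operations that preserve the determinant. Since no other ingredients are required, I do not anticipate any substantive obstacle — the main care is simply to keep the block dimensions consistent throughout.
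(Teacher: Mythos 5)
Your proof is correct: the block factorization is verified properly, the determinant of the unit block-lower-triangular factor is $1$, and the block-upper-triangular factor contributes $\det M_1 \cdot \det(M_4 - M_3 M_1^{-1} M_2)$. Note that the paper itself offers no proof of this lemma --- it is quoted directly from the cited reference \cite{boyd2004convex} --- and your block-LU argument is exactly the standard derivation that reference uses, so there is nothing to reconcile; the only caveat worth flagging is that the block dimensions as printed in the statement ($M_2 \in \mathbb{R}^{N\times\mu}$, $M_3 \in \mathbb{R}^{\mu\times N}$) appear to be transposed in the paper, and your proof implicitly (and correctly) uses the consistent convention $M_2 \in \mathbb{R}^{\mu\times N}$, $M_3 \in \mathbb{R}^{N\times\mu}$.
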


\begin{lemma}
Consider the high-order MAS (1) on a connected graph $\mathcal{G}$ with the control protocol (4). Let $0=\lambda_{1}<\lambda_{2}\leq\cdots\leq\lambda_{N}$ be the eigenvalues of the graph Laplacian matrix.
Then
\\(i) consensus can be achieved asymptotically if and only if $r < 1$;
\\(ii) the final consensus state is
\begin{equation}
\mathop {\lim }\limits_{k \to \infty } x(k) = \bm{1}_N \otimes \left[ {\begin{array}{*{20}{c}}
{\mathop {\lim }\limits_{k \to \infty } {s_1}(k)}, \cdots,
{\mathop {\lim }\limits_{k \to \infty } {s_n}(k)}
\end{array}} \right]^T,
\end{equation}
where \[{s_j}(k) = \frac{1}{N}\sum\limits_{m = 1}^{n - j + 1} {{\tau ^{m - 1}}C_k^{m - 1}\sum\limits_{p = 1}^N {x_p^{(m + j - 1)}(0)} }, j = 1, \ldots ,n.\]
\end{lemma}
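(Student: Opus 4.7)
The plan is to exploit the eigen-decomposition already derived in (6) and then compute $A^k$ in closed form using the nilpotent structure of $A-I_n$.

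First I would rewrite the consensus part of (6) using the mixed-product rule $(A_1\otimes B_1)(A_2\otimes B_2)=A_1A_2\otimes B_1B_2$ to obtain
\[\tfrac{1}{N}(\bm{1}_N\otimes I_n)A^k(\bm{1}_N^T\otimes I_n)\bm{x}(0)=\bm{1}_N\otimes \bigl(A^k\bar{\bm{x}}(0)\bigr),\]
where $\bar{\bm{x}}(0)=\tfrac{1}{N}\sum_{p=1}^N\bm{x}_p(0)$. This term already represents perfect agreement among agents on the value $A^k\bar{\bm{x}}(0)$, so all possible disagreement lives in the second sum in (6).

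For the sufficiency direction in (i), $r<1$ forces $\rho(H(\lambda_i,K))<1$ for every $i\ge 2$, so $H^k(\lambda_i,K)\to 0$ geometrically and the second sum vanishes; consensus follows with common value $A^k\bar{\bm{x}}(0)$. For the necessity direction, suppose $r\ge 1$ and let $\lambda_i$ attain the maximum, with $\bm{w}$ an eigenvector of $H(\lambda_i,K)$ whose eigenvalue has modulus at least one. Picking the initial state $\bm{x}(0)=\bm{v}_i\otimes\bm{w}$ and using $\bm{v}_j^T\bm{v}_i=\delta_{ij}$ collapses (6) to $\bm{x}(k)=(\bm{v}_i\otimes I_n)H^k(\lambda_i,K)\bm{w}$. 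Since $\bm{v}_i\perp\bm{1}_N$, the entries of $\bm{v}_i$ cannot all coincide, so some $v_{i,p}\neq v_{i,q}$ yields a disagreement $(v_{i,p}-v_{i,q})H^k(\lambda_i,K)\bm{w}$ that does not decay, contradicting asymptotic consensus.

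For part (ii), I would observe that $A=I_n+\tau J$, where $J$ is the nilpotent shift with $1$'s on the first superdiagonal and $J^n=0$. The binomial theorem then yields the finite expansion $A^k=\sum_{m=0}^{n-1} C_k^{m}\,\tau^{m} J^{m}$, whose $(j,l)$-entry equals $C_k^{l-j}\tau^{l-j}$ for $l\ge j$ and zero otherwise. Computing the $j$-th component of $A^k\bar{\bm{x}}(0)$ and reindexing $m=l-j+1$ reproduces the stated formula for $s_j(k)$, and passing to the limit gives (8). The delicate step is the necessity half of (i): simply noting that $H^k$ fails to vanish is not enough, since the leading consensus term in (6) could in principle mask the disagreement; engineering an initial condition that excites only a single spatial eigenmode of $\mathcal{L}$ is the trick that makes the residual visible at the agent level.
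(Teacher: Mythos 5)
Your proposal is correct and follows essentially the same route as the paper: the modal decomposition (6), the observation that the disagreement terms vanish if and only if every $\rho(H(\lambda_i,K))<1$, i.e.\ $r<1$, and the explicit closed form of $A^k$ (your binomial expansion of $A=I_n+\tau J$ with $J$ nilpotent reproduces exactly the matrix (11) that the paper states ``by direct computation''). The one place you go beyond the paper is the necessity half of (i): the paper simply asserts the equivalence, whereas you construct the mode-exciting initial condition $\bm{x}(0)=\bm{v}_i\otimes\bm{w}$ to exhibit a non-decaying disagreement, which is the right idea; the only loose end is that $\bm{w}$ and its eigenvalue may be complex, so you should pass to $\mathrm{Re}(\bm{w})$ or $\mathrm{Im}(\bm{w})$ (at least one of $\mathrm{Re}(\mu^k\bm{w})$, $\mathrm{Im}(\mu^k\bm{w})$ fails to vanish along a subsequence since their squared norms sum to $|\mu|^{2k}\|\bm{w}\|^2$) to keep the initial state real.
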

\begin{proof}
(i)
Note that
$\mathop {\lim }\limits_{k \to \infty } {[H(\lambda_i,K)]^k} = \bm{0}_{n \times n}$, if and only if $\rho(H(\lambda_i,K))<1$.
Then $\mathop {\lim }\limits_{k \to \infty } {[H(\lambda_i,K)]^k} = \bm{0}_{n \times n}$ holds for all $i$, if and only if $r<1$.
It follows from (6) that
\begin{equation}
\begin{aligned}
\mathop {\lim }\limits_{{k} \to \infty } \bm{x}({k})
&= \frac{1}{N}({\bm{1}_N} \otimes {I_n})\mathop {\lim }\limits_{{k} \to \infty } {A^{k}}(\bm{1}_N^T \otimes {I_n})\bm{x}(0)
\\&= \frac{1}{N} \mathop {\lim }\limits_{{k} \to \infty } {{\bm{1}_N \bm{1}_N^{T}}} \otimes {A^{k}}\bm{x}(0),
\end{aligned}
\end{equation}
holds if and only if $r<1$.
Equation (9) is equivalent to
\begin{equation}
\mathop {\lim }\limits_{{k} \to \infty } {\bm{x}_i}({k}) = \frac{1}{N}  \mathop {\lim }\limits_{{k} \to \infty } {A^k}\sum\limits_{p = 1}^N {{\bm{x}_p}(0)}, \,i = 1,\ldots,N,
\end{equation}
which implies $\mathop {\lim }\limits_{k \to \infty } [ {x_i^{(l)}(k) - x_j^{(l)}(k)} ] = 0$.
Thus, consensus can be achieved asymptotically if and only if $r < 1$.
\\(ii) By direct computation, we have
\begin{equation}
{A^k} = \left[ {\begin{array}{*{20}{c}}
1&{C_k^1\tau }&{C_k^2{\tau ^2}}& \cdots &{C_k^{n - 1}{\tau ^{n - 1}}}\\
{}&1&{C_k^1\tau }& \ddots & \vdots \\
{}&{}& \ddots &{C_k^1\tau }&{C_k^2{\tau ^2}}\\
{}&{}&{}&1&{C_k^1\tau }\\
{}&{}&{}&{}&1
\end{array}} \right].
\end{equation}
Substituting (11) into (10), we get the consensus state as shown in (8).
\end{proof}
\begin{remark}
Note that the final state (8) is a kind of dynamic consensus.
The average consensus of the $l$-order state can be achieved, only if we set
$\sum\limits_{i = 1}^N {x_i^{(m)}(0)}  = 0,m = l + 1, \ldots ,n$.
\end{remark}

Next,
the direct link between the consensus convergence rate and the control gains is established as follows.

\begin{theorem}
Consider the high-order MAS (1) on a connected graph $\mathcal{G}$ with the control protocol (4).
Denote
\begin{equation}
\begin{aligned}
{R_i}(z,K) &=\det (zI - H({\lambda _i,K}))
\\&= {z^n} + b_1(\lambda_i){z^{n - 1}} +  \cdots b_{n - 1}(\lambda_i)z + b_n(\lambda_i) ,
\end{aligned}
\end{equation}
where
\begin{equation}
\begin{array}{l}
b_j(\lambda_i) = {\lambda _i}\sum\limits_{p = 1}^j {{{( - 1)}^{j - p}}{\tau ^p}{K_{n + 1 - p}}C_{n - p}^{n - j}}  + {( - 1)^j}C_n^{n - j},
\\i = 2, \ldots ,N,j = 1, \ldots ,n.
\end{array}
\end{equation}
Then consensus is achieved asymptotically if and only if
all the roots of
${R_i}(z,K)=0,i=2,\ldots,N$
are within the unit circle.
\end{theorem}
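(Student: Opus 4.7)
The statement has two pieces that separate cleanly. The first is the equivalence between asymptotic consensus and all roots of $R_i(z,K)=0$ lying inside the unit disk; the second is the explicit formula (13) for the coefficients. I would dispatch the first piece in one line using Lemma 2: since $r(K)=\max_{i\ge 2}\rho(H(\lambda_i,K))$, the condition $r<1$ is by definition the requirement that every eigenvalue of each $H(\lambda_i,K)$, i.e.\ every root of its characteristic polynomial $R_i(z,K)$, have modulus less than $1$. So the real work is identifying $R_i(z,K)$ with the polynomial in (12)--(13).

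For that identification, the key observation is that $BK$ is rank one (column times row), which makes $H(\lambda_i,K)=A-\lambda_i BK$ a rank-one perturbation of the triangular matrix $A$. I would apply the matrix determinant lemma as a polynomial identity in $z$ to write
\begin{equation*}
R_i(z,K)=\det(zI-A)\bigl(1+\lambda_i K(zI-A)^{-1}B\bigr).
\end{equation*}
The two factors are both easy to evaluate in closed form: $\det(zI-A)=(z-1)^n$ because $A$ is upper triangular with unit diagonal, and the upper-bidiagonal structure of $zI-A$ together with $B=\tau e_n$ makes $(zI-A)^{-1}B$ a vector whose $i$-th component is $\tau^{n-i+1}/(z-1)^{n-i+1}$. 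Substituting these in and reindexing the sum $K(zI-A)^{-1}B=\sum_{i=1}^{n}K_i\tau^{n-i+1}/(z-1)^{n-i+1}$ via $m=n-i+1$ gives a closed form of the shape $(z-1)^n+\lambda_i\sum_{m=1}^{n}K_{n-m+1}\tau^m(z-1)^{n-m}$.

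The remaining task is bookkeeping: expand each $(z-1)^{n-m}$ by the binomial theorem, collect the coefficient of $z^{n-j}$, and match it against the right-hand side of (13). This is where I expect the main friction, not conceptually but notationally, because one has to keep the indices $p$, $m$, $j$ and the various $C_n^{\cdot}$ factors aligned with the paper's conventions and observe that only the terms with $m\le j$ contribute to $z^{n-j}$ (since $(z-1)^{n-m}$ has degree $n-m$). The $(z-1)^n$ piece supplies the $(-1)^j C_n^{n-j}$ term, and the $m$-th piece of the sum supplies $\lambda_i(-1)^{j-m}\tau^m K_{n-m+1}C_{n-m}^{n-j}$; after relabelling $m\to p$ this is exactly $b_j(\lambda_i)$. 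A quick sanity check at $j=0$ recovers the leading coefficient $1$, and combining the resulting formula with the first paragraph closes the proof.
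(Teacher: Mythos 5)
Your proposal is correct, and its overall architecture is the same as the paper's: reduce the statement to computing $\det(zI-H(\lambda_i,K))$ in closed form in powers of $(z-1)$, expand binomially to get (13), and invoke the fact that consensus holds iff $r(K)=\max_{i\ge 2}\rho(H(\lambda_i,K))<1$ (this is the paper's Lemma~3, not Lemma~2 as you cite --- Lemma~2 is the Schur complement). The one genuine difference is the determinant identity you use to reach the intermediate form $(z-1)^n+\lambda_i\sum_{p=1}^{n}\tau^{p}K_{n-p+1}(z-1)^{n-p}$: you exploit the rank-one structure of $BK$ via the matrix determinant lemma, $\det(zI-A+\lambda_i BK)=\det(zI-A)\bigl(1+\lambda_i K(zI-A)^{-1}B\bigr)$, together with the explicit back-substitution $(zI-A)^{-1}B = \bigl[\tau^{n-i+1}/(z-1)^{n-i+1}\bigr]_{i=1}^{n}$, whereas the paper partitions $zI-H$ into blocks $P,Q,Y,U$ (isolating the last row and column) and applies the Schur complement $\det P\cdot\det(U-YP^{-1}Q)$. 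The two identities are essentially equivalent --- the matrix determinant lemma is itself a Schur-complement fact --- and both require the same inversion of the upper-bidiagonal block, so neither buys much over the other; your version is perhaps slightly cleaner in that it makes the rank-one nature of the feedback explicit and yields the resolvent form $1+\lambda_i K(zI-A)^{-1}B$, which is the familiar return-difference expression from classical control. The remaining bookkeeping (only terms with $m\le j$ contribute to $z^{n-j}$, the $(z-1)^n$ term supplying $(-1)^jC_n^{n-j}$, the relabelling $m\to p$) checks out against (13).
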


\begin{proof}
Applying the Schur Complement, we have
\[\det (zI - H({\lambda _i},K)) = \det P\cdot\det (U - Y{P^{ - 1}}Q),\]
where
\[\begin{array}{l}
P = \left[ {\begin{array}{*{20}{c}}
{z - 1}&{ - \tau }&{}&{}&{}\\
{}&{z - 1}&{ - \tau }&{}&{}\\
{}&{}& \ddots & \ddots &{}\\
{}&{}&{}&{z - 1}&{ - \tau }\\
{}&{}&{}&{}&{z - 1}
\end{array}} \right],\\
U = z - 1 + {\lambda _i}\tau {K_n},\\
Y = \left[ {{\lambda _i}\tau {K_1},{\lambda _i}\tau {K_2},...,{\lambda _i}\tau {K_{n - 1}}} \right],\\
Q = {\left[ {0, \ldots ,0, - \tau } \right]^T}.
\end{array}\]
After some determinant calculations, we get
\begin{equation}
\det (zI - H({\lambda _i}),K) = {(z - 1)^n} + {\lambda _i}\sum\limits_{p = 1}^n {{\tau ^p}{K _{n \!-\! p \!+\! 1}}{{(z - 1)}^{n - p}}},
\end{equation}
which can be expanded into (12).
The roots of ${R_i}(z,K)=0$ are within the unit circle if and only if $\rho( H({\lambda _i}),K)<1$.
Finally, it follows from Lemma 3 that consensus is achieved if and only if the roots of ${R_i}(z)=0,i=2,\ldots,N$ are within the unit circle.
\end{proof}

\subsection{Optimization of the convergence rate}

In this subsection, an accelerated consensus algorithm based on gradient descent is designed to optimize the convergence rate.
The lower bound of the convergence rate is given, and explicit control gains are derived as the necessary
condition to achieve the optimal convergence rate.

The goal of the accelerated consensus algorithm is to design the control gain $K$ so that the convergence rate $r(K)$ is as small as possible under the consensus condition, that is,
\begin{equation}
\begin{array}{*{20}{c}}
{\mathop {\min }\limits_{K} r(K)}\\
{\begin{array}{*{20}{c}}
{s.t.}&{\rho \left( {H({\lambda _i},K)} \right) <1 , i=2,\ldots,N.}
\end{array}}
\end{array}
\end{equation}

Denote
$
\nabla r = {[\nabla {r_1}, \ldots ,\nabla {r_n}]^T}, \nabla {r_m} = \frac{{r(K + \bm{\delta}(m)) - r(K)}}{{{\delta}}},
$
where $\bm{\delta}(m) \in \mathbb{R}^{1\times n}$ is a row vector whose elements are $0$ except the $m$-th term is a tiny positive scalar $\delta$.
Then the accelerated consensus algorithm described in Algorithm 1 can provide a numerical solution to the optimization problem (15).

\begin{algorithm}[h]
	\caption{Accelerated Consensus Algorithm Based on Gradient Descent}
	\label{alg::Gradient}
	\begin{algorithmic}[1]
		\Require
        nonzero eigenvalues of graph Laplacian $\lambda_i$;
         number of iterations $T$;
        sampling period $\tau$;
         a tiny positive scalar $\delta$;
         the learning rate $\eta$;
         initial parameters $K_i^{(0)}$
		\Ensure
		$r^*=r(K^{(T)})$, $K^*=K^{(T)}$
    \State Calculate $r(K^{(0)})$, $\nabla r^{(0)}_m=\frac{{r(K^{(0)} + \bm{\delta}(m)) - r(K^{(0)})}}{{{\delta}}}$
    \State \textbf{for} $t=1$ to $T$ do
    \State ~~~~$K^{(t)}= K^{(t-1)}- \eta \nabla r^{(t-1)}$
    \State ~~~~$\nabla r^{(t)}_m= \frac{{r(K^{(t)}) + \bm{\delta}(m)) - r(K^{(t)})}}{{{\delta}}}$ for all $m=1,\ldots,n$
    \State \textbf{end for}
	\end{algorithmic}
\end{algorithm}

The convergence rate in Algorithm 1 is bounded.
Next, we give a lower bound on the convergence rate, and the necessary condition for reaching this convergence rate.

\begin{theorem}
Consider the high-order MAS (1) on a connected graph ${\mathcal{G}}$ with the control protocol (4).
The following conclusions hold.
\\(i) The consensus convergence rate has a lower bound
\begin{equation}
{r} \ge {\left( {\frac{{{\lambda _N} - {\lambda _2}}}{{{\lambda _N} + {\lambda _2}}}} \right)^{1/n}}.
\end{equation}
\\(ii) The convergence rate ${r^*} = {\left( {\frac{{{\lambda _N} - {\lambda _2}}}{{{\lambda _N} + {\lambda _2}}}} \right)^{1/n}}$ can be achieved only if the control gains are

\begin{equation}
\begin{array}{l}
K_j^* =\frac{1}{{{\tau ^{n \!+\! 1 \!-\! j}}}}({f_{n + 1 - j}} + \sum\limits_{i = 1}^{n - j} {K_{j + i}^*{{( - 1)}^{i + 1}}{\tau ^{n - j + 1 - i}}C_{j - 1 + i}^{j - 1}} ),\\
K_n^* = \frac{{{1}}}{\tau }f_1,j =1, \ldots ,n - 1,
\end{array}
\end{equation}
where
\[\begin{array}{l}
{f_q} = \frac{{{{( - 1)}^q}}}{{2{\lambda _2}{\lambda _N}}}[{({r^*})^{ - n + 2q}}C_n^q({\lambda _N} - {\lambda _2}) - C_n^{n - q}({\lambda _N} + {\lambda _2})],\\
q = 1, \ldots ,n.
\end{array}\]
\end{theorem}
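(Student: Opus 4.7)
The plan is to exploit a single structural fact: if $\rho(H(\lambda_i,K))\le r$, then all $n$ roots of $R_i(z,K)$ lie in the closed disk $|z|\le r$, so the product of those roots has modulus at most $r^n$.

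For part (i), the key observation is that $(-1)^n b_n(\lambda_i)$ equals that product of roots, so $|b_n(\lambda_i)|\le r^n$. By (13), $b_n(\lambda)=\lambda c+(-1)^n$ is affine in $\lambda$ with $c=\sum_{p=1}^n(-1)^{n-p}\tau^p K_{n+1-p}$. Writing the inequality $|b_n(\lambda_i)|\le r^n$ separately at $i=2$ and $i=N$ and then eliminating $c$ from the resulting two-sided inequalities yields (16) directly.

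For part (ii), tightness of that bound forces $|b_n(\lambda_2)|=|b_n(\lambda_N)|=(r^*)^n$, and since each $|b_n(\lambda_i)|$ is a product of root moduli that are individually bounded by $r^*$, equality forces every root of both $R_{\lambda_2}(z,K^*)$ and $R_{\lambda_N}(z,K^*)$ to lie on the circle $|z|=r^*$. A monic degree-$n$ real-coefficient polynomial with all $n$ roots on $|z|=r^*$ is self-reciprocal: its coefficients satisfy $b_j(r^*)^{n-j}=\epsilon\,b_{n-j}(r^*)^{j}$ for some sign $\epsilon\in\{\pm 1\}$, and the sign of $b_n(\lambda_i)$ from part (i) pins down $\epsilon_2=(-1)^n$ and $\epsilon_N=(-1)^{n+1}$.

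Next, by (13), each coefficient $b_j(\lambda)$ is affine in $\lambda$, with a slope $\beta_j$ that depends triangularly on the $K_p^*$'s. I would substitute $b_j(\lambda)=(-1)^jC_n^j+\lambda\beta_j$ into the two self-reciprocal identities (at $\lambda_2$ with $\epsilon_2$ and at $\lambda_N$ with $\epsilon_N$); for each index pair $(j,n-j)$ this reduces to a $2\times 2$ linear system in $(\beta_j,\beta_{n-j})$ whose solution, combined with the identity $(r^*)^n(\lambda_N+\lambda_2)=\lambda_N-\lambda_2$ from part (i), delivers exactly $\beta_j=f_j$. Finally, since $\beta_q=\sum_{p=1}^q(-1)^{q-p}\tau^p K_{n+1-p}^*C_{n-p}^{n-q}$ is lower-triangular in the $K_p^*$'s, inverting it recursively starting from $K_n^*=f_1/\tau$ reproduces the explicit formulas (17).

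The main obstacle is the upgrade from tightness of the scalar bound $|b_n|=(r^*)^n$ to the geometric claim that all $n$ roots of both extremal polynomials lie on $|z|=r^*$, together with consistently tracking the two signs $\epsilon_2,\epsilon_N$ through the self-reciprocal relations. Once that setup is clean, the $2\times 2$ solve for $\beta_j$ and the triangular back-substitution that recovers $K_j^*$ are routine.
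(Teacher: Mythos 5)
Your proposal is correct, and it reaches the theorem by a genuinely different route from the paper. For part (i) the paper applies the bilinear substitution $z=r\frac{s+1}{s-1}$ to $R_i(z,K)$ and invokes the Routh--Hurwitz necessary condition $\bm{c}(\lambda_i)\ge \bm 0$, extracting the bound from a particular linear combination of those inequalities at $\lambda_2$ and $\lambda_N$ (using that the columns of $W(r)$ sum to zero); your argument gets the same bound from the single Vieta identity $|b_n(\lambda_i)|=\prod_j|z_j|\le r^n$ with $b_n(\lambda)=\lambda c+(-1)^n$ affine in $\lambda$, and eliminating $c$ between the two-sided constraints at $\lambda_2$ and $\lambda_N$ gives $(\lambda_N+\lambda_2)r^n\ge\lambda_N-\lambda_2$ with no transform and no stability criterion --- noticeably more elementary. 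For part (ii), the paper's equality analysis forces the alternating coefficients in (26)/(27) to vanish and solves $h(r^*)+JW(r^*)MK^T=0$ wholesale, whereas yours forces all roots of $R_2$ and $R_N$ onto $|z|=r^*$ (a product of $n$ moduli each at most $r^*$ equals $(r^*)^n$ only if each equals $r^*$) and exploits the resulting self-reciprocal identities; these do decouple as you claim, since the $2\times 2$ system in $(\beta_j,\beta_{n-j})$ has determinant $2\epsilon_2\lambda_2\lambda_N(r^*)^n\neq 0$, the even-$n$ middle index gives the single equation $b_{n/2}(\lambda_N)=0$, and the $j=0$ pair is consistent precisely because $(r^*)^n(\lambda_N+\lambda_2)=\lambda_N-\lambda_2$. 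I verified that this yields $\beta_j=f_j$ (e.g.\ for $n=2,3$ and at the middle index), and since your slope $\beta_q$ is exactly the paper's relation $\bm f=MK^T$ in (32)--(33), the triangular back-substitution does recover (17). What your route buys is elementarity and a block-diagonal linear solve in place of the paper's structured inversion of $W(r^*)M$; what the paper's route buys is that the same Routh--Hurwitz machinery is reused to establish sufficiency in Corollaries 1 and 2, which neither your argument nor the necessity-only statement requires.
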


\begin{proof}
(i)
Let $z = r\frac{{s + 1}}{{s - 1}}$ in (12), and have
\begin{equation}
\tilde R_i(s,K) = c_0(\lambda_i){s^n} + c_1(\lambda_i){s^{n - 1}} +  \cdots  + c_{n - 1}(\lambda_i)s + c_n(\lambda_i),
\end{equation}
where
\[\begin{array}{l}
c_j(\lambda_i) = C_n^{n - j} + \sum\limits_{q = 1}^n {{w_{pq}}(r)b_q(\lambda_i)} ,\\{w_{pq}}(r) = {r^{n - q}}{{\tilde w}_{pq}},\\
{{\tilde w}_{pq}} = \sum\limits_{j = 0}^{p - 1} {C_{n - q}^{n - q - j}C_q^{q - p + 1 + j}{{\left( { - 1} \right)}^{p - 1 - j}}},
\end{array}\]
and $b_q(\lambda_i)$ is defined in (13).
The roots of $\tilde R_i(s,K)=0$ are in the left plane, if and only if the roots of $R_i(z,K)=0$ are in the circle with radius $r$.
Let $\bm{c}(\lambda_i)=[c_0(\lambda_i),c_1(\lambda_i),\ldots,c_n(\lambda_i)]^T$.
Substituting (13) into $c_j(\lambda_i)$, the vector form of the linear relationship between $c_j(\lambda_i)$ and $K_j$ can be written as
\begin{equation}
{\bm{c}(\lambda_i)} = \bm{h}(r) + {\lambda _i}{W(r)}{M}K^T,
\end{equation}
where
\[\begin{array}{l}
\bm{h}(r) = [ {h_1}(r), \ldots ,{h_{n + 1}}(r) ]^T,\\
{h_p}(r)\! = \![ {{r^n} \!+\! {{( - 1)}^{n \!+\! p \!-\! 1}}} ]C_n^{n - p + 1}\! + \!\sum\limits_{q = 1}^{n \!-\! 1} {{{w}_{pq}}(r){{( - 1)}^q}C_n^{n \!-\! q}},  \\
W(r) = [w_{pq}(r)] \in {\mathbb{R}^{(n + 1) \times n}},\\
M = {\left[ {\begin{array}{*{20}{c}}
{}&{}&{}&{}&\tau \\
{}&{}&{}&{{\tau ^2}}&{ - \tau C_{n - 1}^{n - 2}}\\
{}&{}&{{\mathinner{\mkern2mu\raise1pt\hbox{.}\mkern2mu
 \raise4pt\hbox{.}\mkern2mu\raise7pt\hbox{.}\mkern1mu}} }&{ - {\tau ^2}C_{n - 2}^{n - 3}}&{\tau C_{n - 1}^{n - 3}}\\
{}&{{\tau ^{n - 1}}}&{{\mathinner{\mkern2mu\raise1pt\hbox{.}\mkern2mu
 \raise4pt\hbox{.}\mkern2mu\raise7pt\hbox{.}\mkern1mu}}}& \vdots & \vdots \\
{{\tau ^n}}&{ - {\tau ^{n - 1}}}& \cdots &{{{\left( { - 1} \right)}^{n - 2}}{\tau ^2}}&{{{\left( { - 1} \right)}^{n - 1}}\tau }
\end{array}} \right]_{{n \times n}}}
\end{array}\]

Before the next step of the proof,
We make some notes on the coefficients of polynomial (18).
${\tilde w}_{pq}$ represents the coefficient of $s^{n-p+1}$ of the polynomial $(s+1)^{n-q}(s-1)^{q}$,
and ${( - 1)^{p - 1}}{{\tilde w}_{pq}}\!= \!{{\tilde w}_{p(n - q)}}$.
By setting $s\!=\!1$, we have $\sum\nolimits_{p = 1}^{n + 1} {{{\tilde w}_{pq}}}  \!=\! 0$, that is, the column sum of the matrix
$W(r)\! =\! [ {{w_{pq}(r)}} ] \!\in\! {\mathbb{R}^{(n \!+\! 1) \times n}}$ is zero.
Similarly, by setting $s=1$ and $s=-1$, we can get that for the $q$-th ($q \ne n$) column of matrix $W(r)$, the sum of odd elements is zero, and the sum of even elements is zero.
These two properties will be used to obtain (23) and (24), respectively.

Applying the Routh-Hurwitz stability criterion \cite{1100537}, the polynomial (18) is stable or marginally stable only if
\begin{equation}
\bm{c}(\lambda_i) \ge \bm{0}, i = 2, \ldots ,N.
\end{equation}
Note that (20) are linear inequalities about $\lambda_i$.
We only need to require
\begin{equation}
\begin{array}{l}
\bm{c}(\lambda_i) \ge \bm{0}, i = 2,N.
\end{array}
\end{equation}
to satisfy (20).
Assume that $n$ is odd.
According to the inequality properties, the following inequality (22) holds.
\begin{equation}
\begin{aligned}
&\frac{{c_0(\lambda_2)\! +\! c_2(\lambda_2)\! +\!  \cdots  \!+\! c_{n \!-\! 1}(\lambda_2)}}{{{\lambda _2}}}
\\&+ \frac{{c_1(\lambda_N) \!+\! c_3(\lambda_N) \!+\!  \cdots  \!+\! c_n(\lambda_N)}}{{{\lambda _N}}}\! \ge\! 0.
\end{aligned}
\end{equation}
Since the column sum of $W(r)$ is zero, the sum of the vector ${W(r)}{M}K^T$ is zero.
According to (19), the left side of the inequality (22) can be written as
\begin{equation}
\frac{{{h_1}(r)\! +\! {h_3}(r) \!+\!  \cdots\!  +\! {h_n}(r)}}{{{\lambda _2}}}
+ \frac{{{h_2}(r) \!+\! {h_4}(r) \!+\!  \cdots  \!+\! {h_{n + 1}}(r)}}{{{\lambda _N}}}
\end{equation}
By some algebraic calculations, we get
\begin{equation}
\begin{array}{l}
{h_1}(r) + {h_3}(r) +  \cdots  + {h_n}(r) = {2^{n - 1}}({r^n} - 1),\\
{h_2}(r) + {h_4}(r) +  \cdots  + {h_{n + 1}}(r) = {2^{n - 1}}({r^n} + 1).
\end{array}
\end{equation}
Then the inequality (22) can be written as
\begin{equation}
{2^{n - 1}}\left( {\frac{{{r^n} - 1}}{{{\lambda _2}}} + \frac{{{r^n} + 1}}{{{\lambda _N}}}} \right) \ge 0.
\end{equation}
It follows from (25) that  ${r} \ge {\left( {\frac{{{\lambda _N} - {\lambda _2}}}{{{\lambda _N} + {\lambda _2}}}} \right)^{1/n}}$.
Similarly, for the case where $n$ is even, we can obtain the same lower bound.
The proof of part (i) is completed.
\\(ii)
For the case where $n$ is odd,
$r=r^*$ holds only if
\begin{equation}
\begin{array}{l}
c_0(\lambda_2) = 0,c_2(\lambda_2) = 0, \ldots ,c_{n \!-\! 1}(\lambda_2) = 0,\\
c_1(\lambda_N) = 0,c_3(\lambda_N) = 0, \ldots ,c_n(\lambda_N) = 0.
\end{array}
\end{equation}
Similarly, for the case where $n$ is even, $r=r^*$ holds only if
\begin{equation}
\begin{array}{*{20}{l}}
{c_1(\lambda_2) = 0,c_3(\lambda_2) = 0, \ldots ,c_{n \!-\! 1}(\lambda_2) = 0,}\\
{c_0(\lambda_N) = 0,c_2(\lambda_N) = 0, \ldots ,c_n(\lambda_N) = 0.}
\end{array}
\end{equation}
To combine (26) and (27), we denote
\[J = diag\left\{ {{J_1},{J_2}, \ldots ,{J_{n + 1}}} \right\}\in {\mathbb{R}^{(n + 1) \times (n + 1)}},\]
where
\[{J_i} = \frac{{2{\lambda _2}{\lambda _N}}}{{({\lambda _N} + {\lambda _2}) + {{( - 1)}^{n - i}}({\lambda _N} - {\lambda _2})}},i = 1, \ldots ,n + 1.\]
Then $r=r^*$ holds for any $n$ only if euqation (28) holds.
\begin{equation}
h(r^*)+JW(r^*)M K^T=0.
\end{equation}

Next, we solve $K$ in equation (28).
Since $J$ is invertible, (28) is equivalent to
\begin{equation}
W({r^*})M{K^T} = -{J^{ - 1}}\bm{h}({r^*}).
\end{equation}
The $p$-th row of $-{J^{ - 1}}\bm{h}({r^*})$ can be written as

\begin{small}
\begin{equation}
\begin{aligned}
 &  - \!\frac{1}{{{J_p}}}\sum\limits_{q \! =\! 1}^{n \!-\! 1} {{{\tilde w}_{p(n\! -\! q)}}{{({r^*})}^{n \!-\! q}}{{( \!-\! 1)}^{1 \!+\! q \!- \!p}}C_n^{n \!-\! q}}  + C_n^{n \!+\! 1 \!-\! p}\frac{{{{( -\! 1)}^{n \!-\! p}} \!\times \!2}}{{{\lambda _N} \!+\! {\lambda _2}}}
\\=& \frac{1}{{2{\lambda _2}{\lambda _N}}}\sum\limits_{q = 1}^{n \!-\! 1} {{{({r^*})}^q}C_n^q{{\tilde w}_{pq}}{{( - \!1)}^q}[({\lambda _N} \!-\! {\lambda _2}) \!+\! {{( - \!1)}^{n \!+\! p}}({\lambda _N} \!+\! {\lambda _2})]}
\\& + C_n^{n \!+\! 1\! -\! p}\frac{{{{( -\! 1)}^{n \!-\! p}} \times 2}}{{{\lambda _N} \!+\! {\lambda _2}}}
\\=& \sum\limits_{q = 1}^{n \!-\! 1} {\frac{{{{\tilde w}_{pq}( - \!1)^q}}}{{2{\lambda _2}{\lambda _N}}}[{{({r^*})}^q}C_n^q({\lambda _N} \!-\! {\lambda _2})\! - \! {{({r^*})}^{n \!-\! q}}C_n^{n \!-\! q}({\lambda _N} \!+\! {\lambda _2})]}
\\&+ \!\frac{{{{( - \!1)}^{p \!+ \!n \!-\! 1}}C_n^{n \!+\! 1 \!-\! p}}}{{2{\lambda _2}{\lambda _N}}}\left[ {{{\left( {{r^*}} \right)}^n}({\lambda _N} \!-\! {\lambda _2})\! -\! ({\lambda _N} \!+\! {\lambda _2})} \right]
\\= &\sum\limits_{q = 1}^n { \frac{{{w_{pq}(r^*)( \!-\! 1)^q}}}{{2{\lambda _2}{\lambda _N}}}[{{({r^*})}^{ - \!n \!+ \!2q}}C_n^q({\lambda _N} \!-\! {\lambda _2})\! -\! C_n^{n \!-\! q}({\lambda _N} \!+\! {\lambda _2})]}.
\end{aligned}
\end{equation}
\end{small}
It follows from (30) that
\begin{equation}
\sum\limits_{q = 1}^n {{w_{pq}(r^*)} f_q}=- \frac{1}{{{J_p}}}{h_p}({r^*}),\,\,p=1,\ldots,n+1.
\end{equation}
Then we have
\begin{equation}
\bm{f}=M{\bm{K}^T},
\end{equation}
where $\bm{f} ={\left[ {{f_1}, \ldots ,{f_n}} \right]^T}$.
Equation (32) can be expanded to
\begin{equation}
\begin{aligned}
  {f_1}=&{K_n}\tau , \\
  {f_2}=&{K_{n - 1}}{\tau ^2} - {K_n}\tau C_{n - 1}^{n - 2}, \\
   \cdots  \\
  {f_i}=&{K_{n - i + 1}}{\tau ^i} - {K_{n - i + 2}}{\tau ^{i - 1}}C_{n - i + 1}^{n - i} \hfill
  +  \cdots  \\
        &  + {K_n}{( - 1)^{i - 1}}\tau C_{n - 1}^{n - i}, \hfill
   \\ \cdots  \\
{f_n}= &{K_1}{\tau ^n} - {K_2}{\tau ^{n - 1}} +  \cdots  + {K_n}{{( - 1)}^{n - 1}}\tau .
\end{aligned}
\end{equation}
It follows from (33) that the solution of (28) is (17).
The proof of part (ii) is completed.
\end{proof}

In Theorem 2, we give the necessary condition to achieve the lower bound on the convergence rate.
When $n=1$, this condition is sufficient and necessary \cite{XIAO200465}.
When $n=2$, we can prove that the condition (17) is also sufficient and necessary, as shown in Corollary 1.
Moreover, if the high-order MAS is on a star graph, then the the condition (17) is sufficient and necessary for any $n$, as shown in Corollary 2.

\begin{corollary}
Consider the MAS (1) on a connected graph ${\mathcal{G}}$ with the control protocol (4).
The optimal convergence rate of $n=2$ is
\begin{equation}
{r^*} = \sqrt {{\frac{{{\lambda _N} - {\lambda _2}}}{{{\lambda _N} + {\lambda _2}}}}}
\end{equation}
with the following control gains
\begin{equation}
{K_1^*} = \frac{{2\lambda _2 }}{{{\tau^2}(\lambda _2  + \lambda _N )\lambda _N }},
{K_2^*} = \frac{2}{{\lambda _N \tau}}.
\end{equation}
\end{corollary}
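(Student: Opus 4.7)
The statement goes beyond Theorem~2 in two ways: it writes (17) out explicitly for $n=2$ and, more importantly, it asserts that the resulting gains are \emph{sufficient} (not merely necessary) to attain $r^{*}$. The plan is therefore to first read off (35) from Theorem~2(ii) with $n=2$, and then verify sufficiency directly by bounding the spectral radius $\rho(H(\lambda_i,K^{*}))$ uniformly over $\lambda_i\in[\lambda_2,\lambda_N]$.

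Step 1 (Necessity). Specialize the formula for $f_q$ in Theorem~2 to $n=2$. Using $(r^{*})^{2}=(\lambda_N-\lambda_2)/(\lambda_N+\lambda_2)$ and simplifying yields $f_1 = 2/\lambda_N$ and $f_2 = -2/(\lambda_2+\lambda_N)$. Inverting the triangular relation (33), which here reduces to $f_1=K_2\tau$ and $f_2=K_1\tau^{2}-K_2\tau$, reproduces exactly (35).

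Step 2 (Sufficiency). Substitute $K^{*}$ into (12)--(13) for $n=2$; after simplification
\[
R_i(z,K^{*})=z^{2}+\frac{2(\lambda_i-\lambda_N)}{\lambda_N}\,z+\frac{\lambda_2+\lambda_N-2\lambda_i}{\lambda_2+\lambda_N}.
\]
At $\lambda_i=\lambda_N$ this is $z^{2}-(r^{*})^{2}$, whose roots are $\pm r^{*}$; at $\lambda_i=\lambda_2$ the discriminant is negative and the product of roots equals $(r^{*})^{2}$, so the conjugate pair also has modulus $r^{*}$. The discriminant $\Delta(\lambda_i)$ vanishes at $\lambda_i^{\circ}=2\lambda_2\lambda_N/(\lambda_2+\lambda_N)$, splitting $[\lambda_2,\lambda_N]$ into a conjugate-root interval $[\lambda_2,\lambda_i^{\circ}]$, where $|z|^{2}=b_2(\lambda_i)$ is linearly decreasing in $\lambda_i$ and hence $\leq(r^{*})^{2}$, and a real-root interval $[\lambda_i^{\circ},\lambda_N]$. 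On the latter the required bound $(|b_1|+\sqrt{\Delta})/2\leq r^{*}$ squares to $r^{*}|b_1|\leq(r^{*})^{2}+b_2$; substituting the explicit expressions for $b_1,b_2,(r^{*})^{2}$ cancels a common factor $2(\lambda_N-\lambda_i)$ and collapses the inequality to $r^{*}\leq\lambda_N/(\lambda_N+\lambda_2)$, which itself squares to the trivial $-\lambda_2^{2}\leq 0$. Hence $\rho(H(\lambda_i,K^{*}))\leq r^{*}$ throughout, with equality at both endpoints, so $r(K^{*})=r^{*}$.

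The main obstacle is the real-root regime in Step~2: because $\rho$ is not simply $\sqrt{|b_2|}$ there, a direct spectral-radius-as-modulus-of-$b_2$ argument fails, and the crux is the clean cancellation that reduces the bound to a trivial inequality in $\lambda_2$ and $\lambda_N$. Once that cancellation is spotted, the remainder is routine algebra together with the endpoint computations already carried out above.
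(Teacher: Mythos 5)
Your proposal is correct, but it proves sufficiency by a genuinely different route from the paper. The paper does not invoke Theorem~2 at all in Corollary~1: it re-runs the whole argument for $n=2$ on the bilinearly transformed polynomial $\tilde R_i(s,K)$ in (36), where the degree-two Routh--Hurwitz criterion reduces to nonnegativity of the three coefficients; linearity in $\lambda_i$ collapses these to the three inequalities (38), whose sum gives the lower bound, and forcing equality in all three yields (35). Because coefficient nonnegativity is an \emph{iff} condition in degree two, necessity and sufficiency come out of the same computation. You instead import necessity from Theorem~2(ii) (your specialization $f_1=2/\lambda_N$, $f_2=-2/(\lambda_2+\lambda_N)$ and the inversion of (33) are correct) and then verify sufficiency directly on the original quadratic $R_i(z,K^*)$, splitting $[\lambda_2,\lambda_N]$ at $\lambda_i^\circ=2\lambda_2\lambda_N/(\lambda_2+\lambda_N)$ into a complex-root regime, where $\rho^2=b_2(\lambda_i)$ decreases from $(r^*)^2$, and a real-root regime, where the bound collapses to $-\lambda_2^2\le 0$ after cancelling $2(\lambda_N-\lambda_i)$; I checked these computations and they are right. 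Your approach is more elementary (no bilinear transform, no Routh--Hurwitz) and shows explicitly that the spectral radius equals $r^*$ exactly at both extreme eigenvalues, which the paper's argument leaves implicit; the paper's approach is shorter and produces the gains and the lower bound in one pass. One small point to tighten: the equivalence between $(|b_1|+\sqrt{\Delta})/2\le r^*$ and $r^*|b_1|\le (r^*)^2+b_2$ requires $|b_1|\le 2r^*$ before squaring; this does hold on the real-root interval, since there $|b_1|=2(\lambda_N-\lambda_i)/\lambda_N\le 2(\lambda_N-\lambda_i^\circ)/\lambda_N=2(r^*)^2\le 2r^*$, but you should state it.
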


\begin{proof}
When $n=2$, the polynomial (18) becomes
\begin{equation}
\begin{aligned}
\tilde R_i(s,K) = &[{r^2} + ({\lambda _i}\tau {K_2} - 2)r + 1 + {\lambda _i}{\tau ^2}{K_1} - {\lambda _i}\tau {K_2}]{s^2}
\\&+ [2{r^2} - 2(1 + {\lambda _i}{\tau ^2}{K_1} - {\lambda _i}\tau {K_2})]{s}
\\&+ {r^2} - ({\lambda _i}\tau {K _2} - 2)r + 1 + {\lambda _i}{\tau ^2}{K_1} - {\lambda _i}\tau {K_2}.
\end{aligned}
\end{equation}
According to the Routh-Hurwitz stability criterion, (36) is stable or marginally stable if and only if
\begin{equation}
\begin{aligned}
&{r^2} + ({\lambda _i}\tau {K_2} - 2)r + 1 + {\lambda _i}{\tau ^2}{K_1} - {\lambda _i}\tau {K_2}\ge 0,
\\&2{r^2} - 2(1 + {\lambda _i}{\tau ^2}{K_1} - {\lambda _i}\tau {K_2})\ge 0,
\\&{r^2} - ({\lambda _i}\tau {K _2} - 2)r + 1 + {\lambda _i}{\tau ^2}{K_1} - {\lambda _i}\tau {K_2}\ge 0.
\end{aligned}
\end{equation}
Since all constraints in (37) are all linear with respect to $\lambda_i$,
the inequalities can be reduced to
\begin{equation}
\begin{aligned}
&\tau {K_1} \!-\! (1 \!-\! r){K_2} \!+\! \frac{{{{(1 \!-\! r)}^2}}}{{{\lambda _N}\tau }} \!\ge\! 0,\\
& - \!2\tau {K_1} \!+\! 2{K_2} \!-\! \frac{{2 \!-\! 2{r^2}}}{{{\lambda _2}\tau }}     \! \ge\! 0,\\
&\tau {K_1}\! -\! (1 \!+\! r){K_2} \!+\! \frac{{{{(1\! +\! r)}^2}}}{{{\lambda _N}\tau }} \!\ge\! 0.
\end{aligned}
\end{equation}
Adding the three inequalities in (38), we have
\begin{equation}
\frac{{1 + {r^2}}}{{\lambda_N \tau }} - \frac{{1 - {r^2}}}{{\lambda_2 \tau }} \ge 0.
\end{equation}
It follows from (39) that $r \ge \sqrt {\frac{{\lambda_N  - \lambda_2}}{{\lambda_N  + \lambda_2 }}}$.
The optimal convergence rate  $r^* =  \sqrt {\frac{{\lambda_N  - \lambda_2}}{{\lambda_N  + \lambda_2 }}}$
is achieved if and only if
\begin{equation}
\begin{aligned}
&\tau {K_1} \!-\! (1 \!-\! r){K_2} \!+\! \frac{{{{(1 \!-\! r)}^2}}}{{{\lambda _N}\tau }} \!=\! 0,\\
 &- \!2\tau {K_1} \!+\! 2{K_2} \!-\! \frac{{2 \!-\! 2{r^2}}}{{{\lambda _2}\tau }}     \! =\! 0,\\
&\tau {K_1}\! -\! (1 \!+\! r){K_2} \!+\! \frac{{{{(1\! +\! r)}^2}}}{{{\lambda _N}\tau }} \!=\! 0.
\end{aligned}
\end{equation}
The solution to (40) is given by (35).
The proof is completed.
\end{proof}

\begin{remark}
In Corollary 1, we apply the Routh-Hurwitz stability criterion to derive the optimal convergence rate ${r^*} = \sqrt {{\frac{{{\lambda _N} - {\lambda _2}}}{{{\lambda _N} + {\lambda _2}}}}}$.
In \cite{2011Network,2014Closed}, authors obtained the same convergence rate, by analyzing the eigenvalues of the closed-loop matrix in the complex plane.
\end{remark}

\begin{corollary}
Consider the MAS (1) on a star graph ${\mathcal{G}}$ with the control protocol (4).
The optimal convergence rate is
$
{r^*} = {\left( {\frac{{{\lambda _N} - {\lambda _2}}}{{{\lambda _N} + {\lambda _2}}}} \right)^{1/n}}
$
with the optimal control gains (17).
\end{corollary}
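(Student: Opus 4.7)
The plan is to exploit the very limited spectrum of the star graph: its Laplacian has only two distinct nonzero eigenvalues, $\lambda_2$ (of multiplicity $N-2$) and $\lambda_N$ (simple). Consequently $r(K) = \max\{\rho(H(\lambda_2,K)),\,\rho(H(\lambda_N,K))\}$, and by Theorem~2(i)--(ii) the bound $r(K)\ge r^*$ together with the unique candidate $K=K^*$ for equality are already in hand. What remains is to verify that $K^*$ actually attains the bound, i.e.\ that $\rho(H(\lambda_i,K^*))=r^*$ for $i\in\{2,N\}$; then $r(K^*)=r^*$.

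To check this, I would start from the bilinear-transformed polynomial $\tilde R_i(s,K^*)$ of Theorem~2 and first make a combinatorial identity explicit: the coefficient $h_p(r)$ appearing in (19) equals $C_n^{n-p+1}(r-1)^{n-p+1}(r+1)^{p-1}$, namely the coefficient of $s^{n-p+1}$ in $\bigl(r(s+1)-(s-1)\bigr)^n=\bigl((r-1)s+(r+1)\bigr)^n$. This follows by collecting the full sum $\sum_{q=0}^{n}(-1)^q C_n^{n-q}w_{pq}(r)$ via the binomial theorem, using the interpretation of $\tilde w_{pq}$ as the $s^{n-p+1}$-coefficient of $(s+1)^{n-q}(s-1)^q$ already recorded in the proof of Theorem~2.

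Combining this identity with the vanishing pattern of $\bm{c}(\lambda_i)$ implied by (28)---where $J_p\in\{\lambda_2,\lambda_N\}$ alternates with the parity of $p$---yields, up to positive scalar prefactors, $\tilde R_2(s,K^*)\propto E(s)$ and $\tilde R_N(s,K^*)\propto O(s)$, with $E$ and $O$ the even and odd parts in $s$ of $\bigl((r^*-1)s+(r^*+1)\bigr)^n$. The equations $E(s)=0$ and $O(s)=0$ amount respectively to $\bigl((r^*-1)s+(r^*+1)\bigr)^n = \pm\bigl((r^*+1)-(r^*-1)s\bigr)^n$; extracting $n$-th roots, every finite solution takes the form $s=\tfrac{r^*+1}{r^*-1}\,i\tan(\theta/2)$ for a real angle $\theta$, and is therefore purely imaginary.

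Under the bilinear map $z=r^*(s+1)/(s-1)$ the imaginary axis corresponds exactly to the circle $|z|=r^*$, so every finite root of $R_i(z,K^*)$ lies on $|z|=r^*$ for $i\in\{2,N\}$; this gives $\rho(H(\lambda_i,K^*))=r^*$, and hence $r(K^*)=r^*$, finishing the proof. The main obstacle is the combinatorial step that produces the closed form for $h_p(r)$; once that is settled, the remainder is an elementary manipulation of even and odd parts of $\bigl((r-1)s+(r+1)\bigr)^n$ together with the standard imaginary-axis--to--circle $|z|=r^*$ correspondence of the bilinear transform.
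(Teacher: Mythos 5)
Your proposal is correct, and it reaches the conclusion by a genuinely different route than the paper. The paper's proof stays inside the Routh--Hurwitz framework: it observes that under (26) (resp.\ (27)) all the Hurwitz determinants $\Delta_1,\ldots,\Delta_n$ of $\tilde R_2(s,K)$ and $\tilde R_N(s,K)$ vanish, and from this marginal-stability degeneracy concludes that the roots of $R_i(z,K)$, $i=2,N$, lie on the circle of radius $r^*$. You instead make everything explicit: the identity $h_p(r)=C_n^{n-p+1}(r-1)^{n-p+1}(r+1)^{p-1}$ does hold (summing $\sum_{q=0}^{n}(-1)^qC_n^{q}r^{n-q}\,[s^{n-p+1}](s+1)^{n-q}(s-1)^{q}$ telescopes by the binomial theorem into $[s^{n-p+1}]\bigl(r(s+1)-(s-1)\bigr)^n$, and it is consistent with the paper's sums (24)); combined with $c_{p-1}(\lambda_i)=h_p(r^*)(1-\lambda_i/J_p)$ and the parity pattern of $J_p$, it indeed yields $\tilde R_2\propto E(s)$ and $\tilde R_N\propto O(s)$, whose roots $s=\frac{r^*+1}{r^*-1}\,i\tan(\theta/2)$ are purely imaginary and map onto $|z|=r^*$. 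What your approach buys is rigor and transparency: the paper's step from ``all Hurwitz determinants vanish'' to ``all roots lie on the boundary circle'' is the delicate degenerate case of Routh--Hurwitz and is asserted rather than argued, whereas you exhibit the roots in closed form. Two cosmetic points: the prefactor $1-\lambda_N/\lambda_2$ is negative rather than positive (irrelevant for root location), and since $\deg E$ or $\deg O$ can drop below $n$, the bilinear map sends the ``missing'' roots to $z=r^*$ itself (the image of $s=\infty$), which still lies on the circle, so the count of eigenvalues of $H(\lambda_i,K^*)$ on $|z|=r^*$ is complete.
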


\begin{proof}
For a star graph with $N$ nodes,
the Laplacian matrix has only two distinct non-zero eigenvalues $\lambda_2 = 1, \lambda_N = N$.
For the case where $n$ is odd,
when (26) holds,
the Hurwitz matrices of $\tilde R_2(s,K)$ and $\tilde R_N(s,K)$ are
\[\begin{array}{l}
{\Delta _1}({\lambda _2}) = {c_1}({\lambda _2}) \ge 0,{\Delta _2}({\lambda _2}) = \left| {\begin{array}{*{20}{c}}
{{c_1}({\lambda _2})}&{{c_3}({\lambda _2})}\\
{{c_0}({\lambda _2})}&{{c_2}({\lambda _2})}
\end{array}} \right| = 0,\\
 \cdots ,{\Delta _{n - 1}}({\lambda _2}) = 0,{\Delta _n}({\lambda _2}) = 0,
\end{array}\]
and
\[\begin{array}{l}
{\Delta _1}({\lambda _N}) = {c_1}({\lambda _N}) = 0,{\Delta _2}({\lambda _N})\! =\! \left| {\begin{array}{*{20}{c}}
{{c_1}({\lambda _N})}&\!{{c_3}({\lambda _N})}\\
{{c_0}({\lambda _N})}&\!{{c_2}({\lambda _N})}
\end{array}} \right| \!=\! 0,\\
 \cdots {\Delta _{n - 1}}({\lambda _N}) = 0,{\Delta _n}({\lambda _N}) = 0,
\end{array}\]
respectively.
Then the roots of $R_i(z,K)\!=\!0,i\!=\!2,N$ are on the circle of radius $r^*$ if and only if (26) holds.
Similarly, for the case where $n$ is even, the roots of $R_i(z,K)\!=\!0,i\!=\!2,N$ are on the circle of radius $r^*$ if and only if (27) holds.
Thus, the optimal convergence rate $r=r^*$ is achieved if and only if (28) holds, and the optimal control gains are given by (17).
\end{proof}

\section{Finite-time consensus by a time-varying control protocol}

In this section, a protocol with time-varying control is presented for high-order MASs to achieve consensus in  finite time.

Note that the finite-time consensus will be reached if and only if $R_i(z,K)=z^n=0$ holds for all $i=2,\ldots,N$ in (12).
However, the control protocol (4) with constant gains can't achieve it.
Therefore, we consider the following time-varying control protocol
\begin{equation}
{u_i}(k) = K(k)\sum\limits_{j \in {N_i}} {{a_{ij}}({x_j}(k) - {x_i}(k))},
\end{equation}
where $K(k)=[{K _1}(k),{K _2}(k), \cdots ,{K _n}(k)]\in \mathbb{R}^{1\times n}$.

\begin{lemma}
(Cayley-Hamilton \cite{mertzios1986generalized})
For a given $n\times n$ matrix  $H$, let $p(z)=\det (z{I_n} - H)$ be the characteristic polynomial of $H$. Then
$p(H) = \bm{0}_{n \times n}$.
\end{lemma}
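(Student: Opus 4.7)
The plan is to prove the classical Cayley--Hamilton identity via the adjugate (classical adjoint) of the characteristic matrix $zI_n - H$. The starting point is the universal cofactor identity, which states that for any square matrix $M$ over a commutative ring one has $M\cdot\operatorname{adj}(M)=\det(M)\cdot I_n$. Specializing this to $M=zI_n-H$, whose entries lie in the polynomial ring $\mathbb{R}[z]$, yields the operator identity
\begin{equation*}
(zI_n-H)\,\operatorname{adj}(zI_n-H)=p(z)\,I_n,
\end{equation*}
which I will exploit by comparing coefficients of powers of $z$.

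Next I would expand the adjugate as a matrix-valued polynomial $\operatorname{adj}(zI_n-H)=\sum_{k=0}^{n-1}B_k z^{k}$ with $B_k\in\mathbb{R}^{n\times n}$, and write $p(z)=z^n+\sum_{k=0}^{n-1}p_k z^{k}$. Matching the coefficient of each power of $z$ on both sides of the operator identity produces the recursion $B_{n-1}=I_n$, $B_{k-1}-HB_k=p_k I_n$ for $k=1,\ldots,n-1$, and $-HB_0=p_0 I_n$. The key trick is now to left-multiply the relation for $z^k$ by $H^{k}$ and sum: since each interior term $H^{k}B_{k-1}$ cancels against the $-H^{k}B_{k-1}$ emerging from the next level, the sum telescopes and leaves exactly $-H^{n}B_{n-1}=-H^n$ on one side and $\sum_{k=0}^{n-1}p_k H^k$ on the other, so $p(H)=H^n+\sum_{k=0}^{n-1}p_k H^k=\bm{0}_{n\times n}$.

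The main obstacle is conceptual rather than computational: one must resist the temptation to substitute $z=H$ directly into $(zI_n-H)\,\operatorname{adj}(zI_n-H)=p(z)I_n$, since the formal vanishing of the left factor would ``prove'' the theorem in one line but is not legitimate, because the matrix $H$ need not commute with the matrix entries of $\operatorname{adj}(zI_n-H)$ when they themselves are re-interpreted as polynomials in $H$. The telescoping argument sidesteps this pitfall by working entirely at the level of fixed matrix coefficients $B_k$, on which $H$ acts unambiguously by left multiplication. An alternative, perhaps cleaner route would be to reduce $H$ to upper-triangular form via a Schur decomposition over $\mathbb{C}$ and annihilate a flag of invariant subspaces inductively; however, that route invokes spectral machinery that is overkill for what is merely a stepping stone to constructing the time-varying gains $K(k)$ later in the paper, so the adjugate--telescope route gives the shortest self-contained justification.
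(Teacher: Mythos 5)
Your proposal is a correct and complete proof of the classical Cayley--Hamilton theorem. Note, however, that the paper itself does not prove this statement at all: Lemma~4 is quoted as a known result with a citation to the literature, so there is no ``paper's proof'' to compare against. Your adjugate--telescoping argument is the standard self-contained derivation: the expansion $\operatorname{adj}(zI_n-H)=\sum_{k=0}^{n-1}B_kz^k$ is legitimate because each entry of the adjugate is an $(n-1)\times(n-1)$ minor of $zI_n-H$ and hence a polynomial of degree at most $n-1$; the coefficient-matching recursion $B_{n-1}=I_n$, $B_{k-1}-HB_k=p_kI_n$, $-HB_0=p_0I_n$ is right; and left-multiplying by $H^k$ and summing does telescope to $\sum_{k=0}^{n-1}p_kH^k=-H^n$, i.e.\ $p(H)=\bm{0}_{n\times n}$. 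Your remark about why one cannot simply substitute $z=H$ into $(zI_n-H)\operatorname{adj}(zI_n-H)=p(z)I_n$ identifies exactly the classical fallacy and explains why the coefficient-level argument is needed. The only caveat is one of economy: since the lemma is a textbook fact used here purely as a tool to force $[A-\lambda_{p_l}BK(nl)]^n=\bm{0}_{n\times n}$ in Theorem~3, a citation (as the authors give) suffices, but your proof is valid if a self-contained justification is desired.
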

\begin{theorem}
Consider the $n$-order MAS (1) on a connected graph $\mathcal{G}$ with the control protocol (41).
Assume that its Laplacian matrix has $\bar l$ distinct nonzero eigenvalues $\lambda_{p_l},l=0,\cdots,{\bar l}-1$, and
${\lambda _{{p_0}}} > {\lambda _{{p_1}}} >  \ldots  > {\lambda _{{p_{{\bar l} - 1}}}}$.
If the control gains are set as
\begin{equation}
\begin{array}{l}
{K_m}(nl + j) = \frac{{C_n^{m - 1}}}{{{\lambda _{{p_l}}}{\tau ^{n \!-\! m \!+\! 1}}}},
l = 0,\ldots ,\bar l \!-\! 1,\,\, j = 0, \ldots ,n \!-\! 1,\\
{K_m}(k) = 0, \,\, k \ge n \bar l
\end{array}
\end{equation}
for all $m = 1, \ldots n$, then consensus will be achieved at step $n\bar l$.
The consensus state is
\begin{equation}
x(k) = {\bm{1}_N} \otimes {\left[ {{s_1}(k), \ldots {s_n}(k)} \right]^T} , \,\, k\ge n \bar l
\end{equation}
 where
\[{s_j}(k) = \frac{1}{N}\sum\limits_{m = 1}^{n - j + 1} {{\tau ^{m - 1}}C_{k}^{m - 1}\sum\limits_{p = 1}^N {x_p^{(m + j - 1)}(0)} } ,j = 1, \ldots ,n.\]
\end{theorem}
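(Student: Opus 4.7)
The plan is to build on the modal decomposition developed in Section III and to leverage the Cayley--Hamilton theorem to annihilate each nonzero-eigenvalue mode in exactly $n$ consecutive steps. First I would rewrite the stacked system with the time-varying gain as $\bm{x}(k+1) = (I_N \otimes A - \mathcal{L} \otimes BK(k))\bm{x}(k)$ and, using $\mathcal{L} = V\Lambda V^T$, decouple it into $N$ independent modes. The zero-eigenvalue mode evolves under $A$ alone and produces the common term $\frac{1}{N}(\bm{1}_N \otimes I_n)A^k(\bm{1}_N^T \otimes I_n)\bm{x}(0)$, while each nonzero-eigenvalue mode $\lambda_i$ is governed by the time-ordered product $\Phi_i(k) := H(\lambda_i, K(k-1))\cdots H(\lambda_i, K(0))$ with $H(\lambda,K) = A - \lambda BK$ as in Section III.

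The central step is to verify that within the $l$-th block of $n$ steps (i.e. $k = nl, nl+1, \ldots, nl+n-1$) the prescribed gain is constant and is exactly what annihilates the $\lambda_{p_l}$-mode. Substituting $K_m = C_n^{m-1}/(\lambda_{p_l}\tau^{n-m+1})$ into the characteristic polynomial formula (14) with $\lambda_i = \lambda_{p_l}$ collapses each coefficient $\lambda_{p_l}\tau^p K_{n-p+1}$ to $C_n^{n-p}$, and a binomial expansion yields
\begin{equation*}
R_{p_l}(z,K(nl)) = (z-1)^n + \sum_{p=1}^n C_n^{n-p}(z-1)^{n-p} = ((z-1)+1)^n = z^n.
\end{equation*}
Cayley--Hamilton (Lemma 4) then forces $[H(\lambda_{p_l},K(nl))]^n = \bm{0}_{n\times n}$; since the gain is held fixed throughout block $l$, the block-$l$ factor appearing in $\Phi_{p_l}$ coincides with this $n$-th power and therefore vanishes.

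Iterating over $l = 0, 1, \ldots, \bar l - 1$ then kills every nonzero-eigenvalue mode inside $\Phi_i(n\bar l)$, because once any contiguous $n$-step factor is zero the whole product is zero. Consequently $\bm{x}(n\bar l)$ reduces to the zero-mode contribution $\bm{1}_N \otimes \frac{1}{N}A^{n\bar l}\sum_{p=1}^N \bm{x}_p(0)$, and substituting the explicit form (11) of $A^k$ into this expression reproduces (43) at step $n\bar l$. For $k \geq n\bar l$ the prescribed gains are zero, so the system reduces to $\bm{x}(k+1) = (I_N \otimes A)\bm{x}(k)$; the consensus structure $\bm{1}_N \otimes \bm{s}(\cdot)$ is preserved and its components update through $A$, which once more matches (43) via the same $A^k$-formula.

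The hard part will be the combinatorial identification $R_{p_l}(z,K(nl)) = z^n$. Everything downstream is a clean application of Cayley--Hamilton together with bookkeeping on modal components, but recognizing that the schedule (42) is precisely the one that plants the binomial expansion $((z-1)+1)^n$ inside (14) is the non-obvious ingredient.
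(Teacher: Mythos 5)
Your proposal is correct and follows essentially the same route as the paper: modal decomposition via $\mathcal{L}=V\Lambda V^T$, a constant gain over each block of $n$ steps chosen so that the characteristic polynomial of $H(\lambda_{p_l},K(nl))$ collapses to $z^n$, Cayley--Hamilton to annihilate that mode, and then reduction to the zero-eigenvalue mode $\frac{1}{N}(\bm{1}_N\otimes I_n)A^k(\bm{1}_N^T\otimes I_n)\bm{x}(0)$ evaluated with the explicit $A^k$. Your binomial-expansion verification that $R_{p_l}(z,K(nl))=((z-1)+1)^n=z^n$ is in fact slightly more complete than the paper's, which only checks that $z=0$ satisfies the characteristic equation.
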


\begin{proof}
The state of the agents has an iterative form
\begin{equation} \nonumber
\begin{aligned}
\bm{x}(t) = &(V \otimes {I_n})diag\{ {A^{t - 1}},{I_{{n_0}}} \otimes \prod\limits_{k = 0}^{t - 1} {[A - {\lambda _{{p_0}}}BK(k)]} ,
\\ & \ldots ,{I_{{n_{{\bar l} - 1}}}} \otimes \prod\limits_{k = 0}^{t - 1} {[A - {\lambda _{{p_{{\bar l} - 1}}}}BK(k)]} \} ({V^T} \otimes {I_n})\bm{x}(0).
\end{aligned}
\end{equation}
where $n_l$ denotes the algebraic multiplicity of $\lambda_{p_l}$, and $\sum\limits_{l = 0}^{{\bar l} - 1} {{n_l}}  = N - 1$.
Then consensus is achieved in finite time if and only if equation (44) holds.

\begin{equation}
\prod\limits_{k = 0}^{t-1} {\left[ {A{-}{\lambda _{{p_l}}}BK(k)} \right]}  = \bm{0}_{n \times n},\,\, l=0,1,\ldots,{\bar l}\!-\!1.
\end{equation}

Next, we design the control gain $K(nl),l=0,1,\ldots,\bar l - 1$ to satisfy
\[\det \left[ {zI- (A - {\lambda _{{p_l}}}BK(nl))} \right] = {z^n} = 0\]
and get
\begin{equation}
\begin{aligned}
&\prod\limits_{k = 0}^{n - 1} A - {\lambda _{{p_0}}}BK(k) = \prod\limits_{k = n}^{2n - 1} {A - {\lambda _{{p_1}}}BK(k)}  =  \cdots
\\&= \prod\limits_{k = n(\bar l - 1)}^{n\bar l - 1} {A - {\lambda _{{p_{\bar l - 1}}}}BK(k) = \bm{0}_{n \times n}} .
\end{aligned}
\end{equation}
Note that the characteristic equation of $A - {\lambda _{{p_l}}}BK(nl)$ can be written as
\begin{equation}
\begin{aligned}
&{(z - 1)^n} + {\lambda _{{p_l}}}\tau {K_n}(nl){(z - 1)^{n - 1}} +  \cdots
\\&+ {\lambda _{{p_l}}}{\tau ^{n - 1}}{K_2}(nl)(z - 1) + {\lambda _{{p_l}}}{\tau ^n}{K_1}(nl) = 0.
\end{aligned}
\end{equation}
Substituting $z=0$ into (46), we have
\begin{equation} \nonumber
\begin{aligned}
&{( - 1)^n} + {\lambda _{{p_l}}}\tau {K_n}(nl){( - 1)^{n - 1}} +  \cdots
\\& + {\lambda _{{p_l}}}{\tau ^{n - 1}}{K_2}(nl)( - 1) + {\lambda _{{p_l}}}{\tau ^n}{K_1}(nl) = 0.
\end{aligned}
\end{equation}
Since
${( - 1)^n} + C_n^{n - 1}{( - 1)^{n - 1}} +  \cdots  + C_n^1( - 1) + 1 =0$,
the characteristic equation (46) holds if we set
\begin{equation}
{K _m}(nl) = \frac{{C_n^{m - 1}}}{{{\lambda _{p_l}}{\tau ^{n - m + 1}}}},m = 1, \ldots ,n.
\end{equation}
According to the Cayley-Hamilton Theorem, we have
$[A - {\lambda_{p_l}}BK(nl)]^n ={\bm{0}_{n \times n}}$
when applying the control gain (47).
Thus, we design $K(nl+j)\equiv K(nl)$ at $j=0,1,\ldots,n-1$, and take the control gains (47) to satisfy (45).
Then the state at step $k\ge n\bar l$ is
\begin{equation}
\begin{aligned}
\bm{x}(k)&=\frac{1}{N}({\bm{1}_N} \otimes {I_n}){A^{k}}(\bm{1}_N^T \otimes {I_n})\bm{x}(0)
\\&= \frac{1}{N}({\bm{1}_N \bm{1}_N^T}) \otimes {A^{k}}\bm{x}(0).
\end{aligned}
\end{equation}
By substituting (11)
into (48), the consensus state (43) is obtained.
\end{proof}

\begin{remark}
The time-varying control sequence designed in (42) is similar to graph filtering \cite{2019Average}.
Large eigenvalues correspond to "high frequencies", and small eigenvalues correspond to "low frequencies".
In order to avoid too much oscillation during the convergence, we suggest to filter out the part with large eigenvalues first,
although the selection of the filtering order does not affect the final result in principle.
\end{remark}

\begin{remark}
Since the high-order MAS (1) can achieve consensus at step $n\bar l$ by applying control gain (42), the sampling period $\tau$ determines the overall convergence time.
Consensus will be achieved with arbitrarily fast convergence speed if $\tau \to 0$, which implies the infinite band-width communication.
In \cite{2018Accelerating}, authors have studied the accelerated consensus of high-order continuous-time systems and obtained the similar conclusion by allocating all the eigenvalues of the closed-loop system matrix.
\end{remark}

\begin{remark}
Reference \cite{2019Average} has proposed that the first-order MASs can achieve consensus at step $\bar l$ by applying the control gain
$K (k) = \frac{1}{{{\lambda _{{p_k}}}}}$.
This is a special case of $n = 1$ in (42).
Specially, if we consider a second-order MAS, it will reach the consensus state
\[\bm{x}(k) \!=\! {\bm{1}_N} \otimes {[\frac{1}{N}\sum\limits_{i = 1}^N {x_i^{(1)}(0)} \! +\! \frac{{k\tau }}{N}\sum\limits_{i = 1}^N {x_i^{(2)}(0)} ,\frac{1}{N}\sum\limits_{i = 1}^N {x_i^{(2)}(0)} ]^T}\]
$ k\ge 2\bar l$, by applying the control gain sequence
\[\begin{array}{l}
{K_1} \!=\! \left\{ {\frac{1}{{{\lambda _{{p_0}}}\!{\tau ^2}}},\frac{1}{{{\lambda _{{p_0}}}\!{\tau ^2}}},\frac{1}{{{\lambda _{{p_1}}}\!{\tau ^2}}},\frac{1}{{{\lambda _{{p_1}}}\!{\tau ^2}}}, \ldots ,\frac{1}{{{\lambda _{{p_{\bar l\! -\! 1}}}}\!{\tau ^2}}},\frac{1}{{{\lambda _{{p_{\bar l\! - \!1}}}}\!{\tau ^2}}}} \right\},\\
{K_2} \!= \!\left\{ {\frac{2}{{{\lambda _{{p_0}}}\!\tau }},\frac{2}{{{\lambda _{{p_0}}}\!\tau }},\frac{2}{{{\lambda _{{p_1}}}\!\tau }},\frac{2}{{{\lambda _{{p_1}}}\!\tau }}, \ldots ,\frac{2}{{{\lambda _{{p_{\bar l \!-\! 1}}}}\!\tau }},\frac{2}{{{\lambda _{{p_{\bar l \!-\! 1}}}}\!\tau }}} \right\}.
\end{array}\]
\end{remark}

\section{Numerical Examples}

This section uses two examples to verify the effectiveness of the proposed theory.

\textbf{Example 1:\,\,}
In this example,
Algorithm 1 is used to optimize the convergence rate on different graphs.
Consider a third-order MAS with $10$ agents on four unweighted graphs:
the graph $\mathcal{G}_1$ randomly generated by a small-world network model shown in Fig. 1,
the cycle graph $\mathcal{G}_2$,
the path graph $\mathcal{G}_3$,
the complete bipartite graph $\mathcal{G}_4$ with $4+5$ vertices.
Set $\tau=0.1$, $T=5000$, $\eta=0.01$, and $\delta  = 10^{-6}$.
Randomly initialize the control gain $K^{(0)}$ multiple times, run Algorithm 1, and record the optimal convergence rate.
The convergence rate $r^*$ obtained by Algorithm 1 is listed in Table I, where $r_{lb}$ represents the lower bound
${\left( {\frac{{{\lambda _N} - {\lambda _2}}}{{{\lambda _N} + {\lambda _2}}}} \right)^{1/3}}$.
It can be found that on different graphs, the convergence rate $r^*$ of the third-order MAS can reach the lower bound $r_{lb}$, and the smaller $\lambda_N/\lambda_2$ is, the smaller $r^*$ is.
Next, generate the initial state of each agent in the interval $[-1,1]$.
The convergence of the consensus error ${\left\| {\bm{e}(k)} \right\|_2}$
on different graphs is shown in Fig. 2.
It can be observed that the better the connectivity of the network is, the faster the consensus error converges.

\begin{figure}[!htbp]
\centering
\includegraphics[height=7cm]{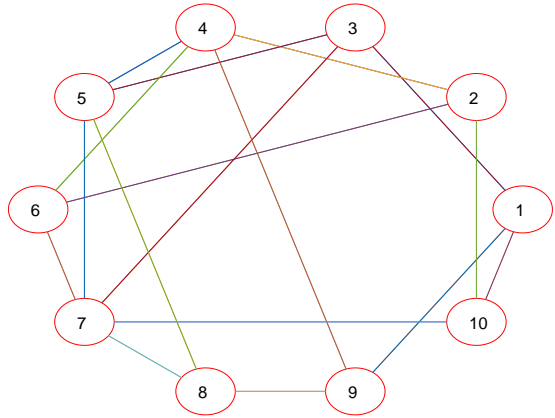}
\caption{The graph $\mathcal{G}_1$}
\label{fig:label}
\end{figure}

\begin{figure}[!htbp]
\centering
\includegraphics[height=7cm]{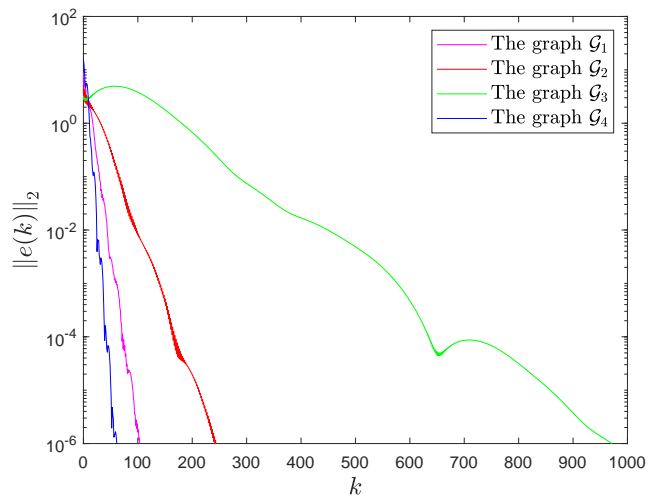}
\caption{Consensus error on different graphs}
\label{fig:label}
\end{figure}

\begin{table}[htbp]
\centering
  \caption{Convergence rate on different graphs}
\begin{tabular}{cccccc}
\toprule
                      &Graph $\mathcal{G}_1$   & Graph $\mathcal{G}_2$  & Graph $\mathcal{G}_3$  & Graph $\mathcal{G}_4$ \\ \midrule
$\lambda_N/\lambda_2$ & 4.4790                       & 10.4721          & 39.8635          &  2.5000         \\
$r_{lb}$              & 0.8595                        & 0.9381           & 0.9834           &  0.7539         \\
$r^*$                 & 0.8595                        & 0.9381           & 0.9834           &  0.7539        \\
\bottomrule
\end{tabular}
\label{tab:addlabel}
\end{table}

\textbf{Example 2:\,\,}
In this example, the effectiveness of the control strategy in Theorem 3 is verified.
Consider the cycle graph $\mathcal{G}_2$ with $10$ nodes.
The distinct non-zero eigenvalues of Laplacian matrix $\mathcal{L}_{\mathcal{G}_2}$ are $\{0.3820,1.3820,2.6180,3.6180,4\}$.
Let $\tau=0.1$. Randomly set the initial state of the agents in the interval $[-5,5]$.
According to Theorem 3, when $n=2$, consensus will be achieved at step $n\bar l = 10$ as shown in Fig. 3, and the final consensus state of agent $i$ is
$\bm{x}_i(k)= [1.3325+0.0963k,0.9627]^T,k\ge 10.$
Similarly, when $n=3$, consensus will be achieved at step $n\bar l = 15$ as shown in Fig. 4, and the consensus state of agent $i$ is
$\bm{x}_i(k)= [1.3325+0.0850k+0.0113k^2,0.9627+0.2266k,2.2662]^T,k\ge 15.$

	\begin{figure*}[!t]
		\vspace{4pt}
		\subfigure[\scriptsize Position]{
			\begin{minipage}[t]{0.49\linewidth}
				\label{fig1:1}
				\centering
				\centerline{\includegraphics[width=8cm]{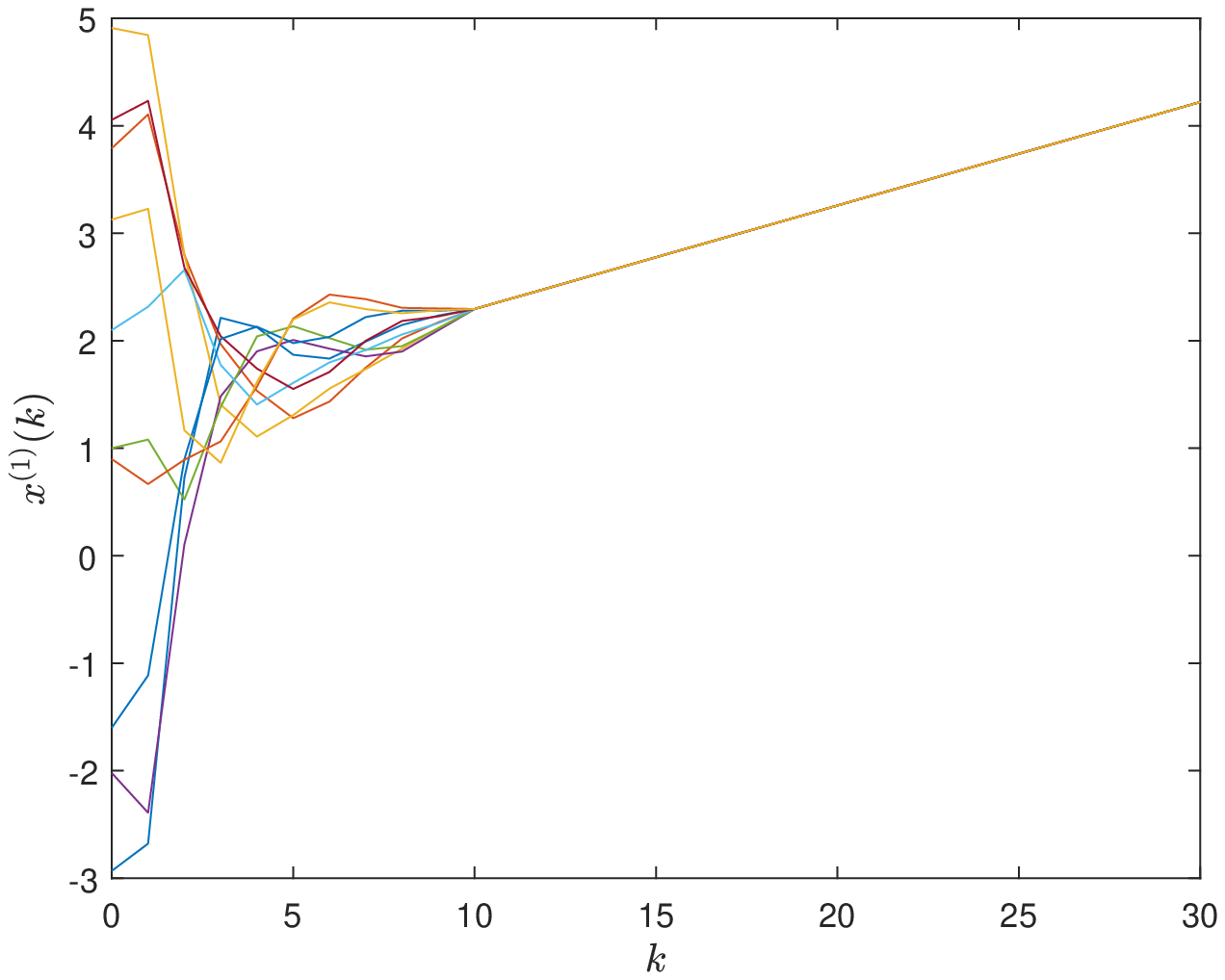}}
			\end{minipage}%
		}
		\subfigure[\scriptsize Velocity]{
			\begin{minipage}[t]{0.49\linewidth}
				\label{fig1:2}
				\centering
				\centerline{\includegraphics[width=8cm]{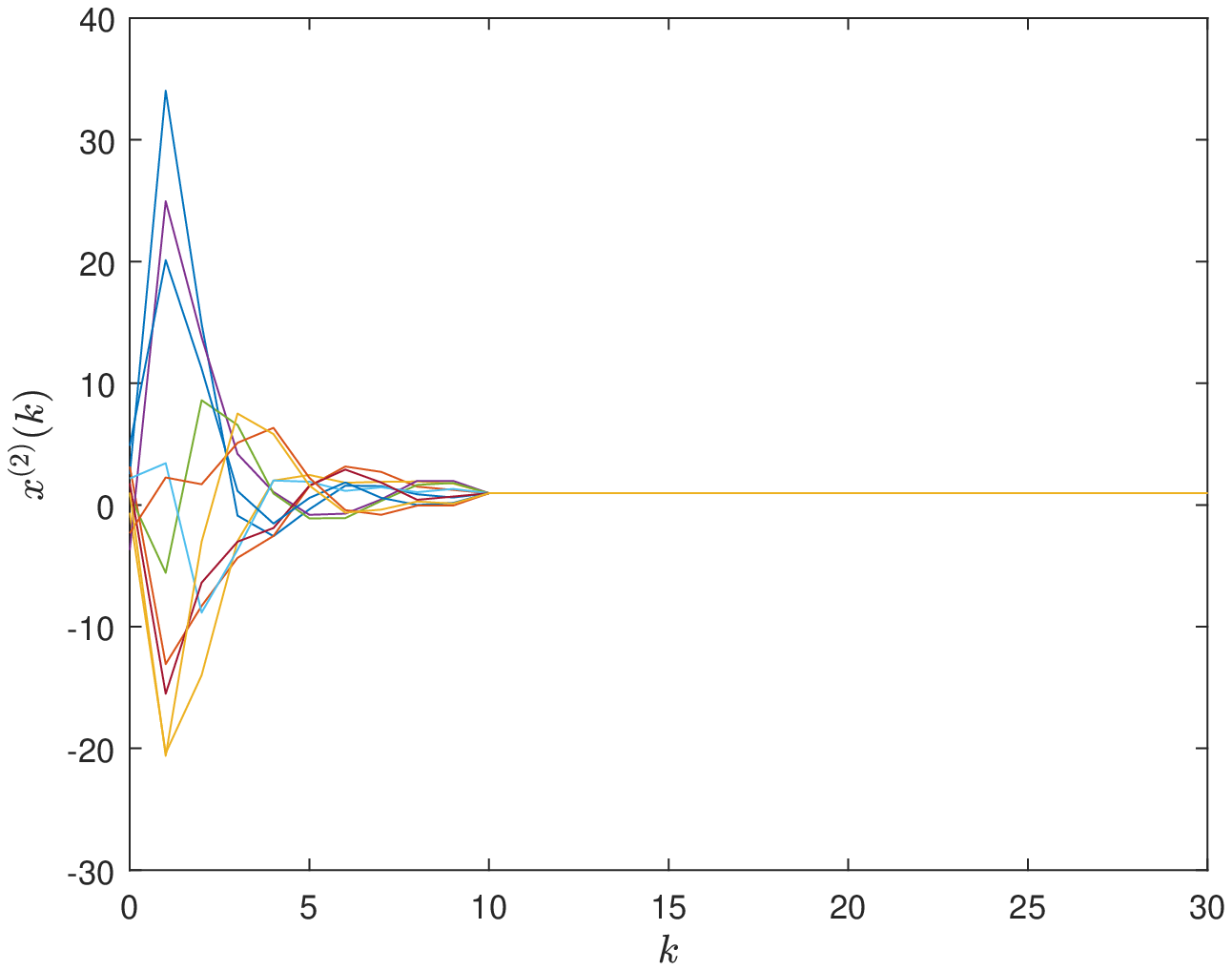}}
			\end{minipage}
		}
		\caption{Finite-time consensus of the second-order MAS}
		\label{}
	\end{figure*}

	\begin{figure*}[!t]
		\vspace{4pt}
		\subfigure[\scriptsize Position]{
			\begin{minipage}[t]{0.33\linewidth}
				\label{fig1:1}
				\centering
				\centerline{\includegraphics[width=5.4cm]{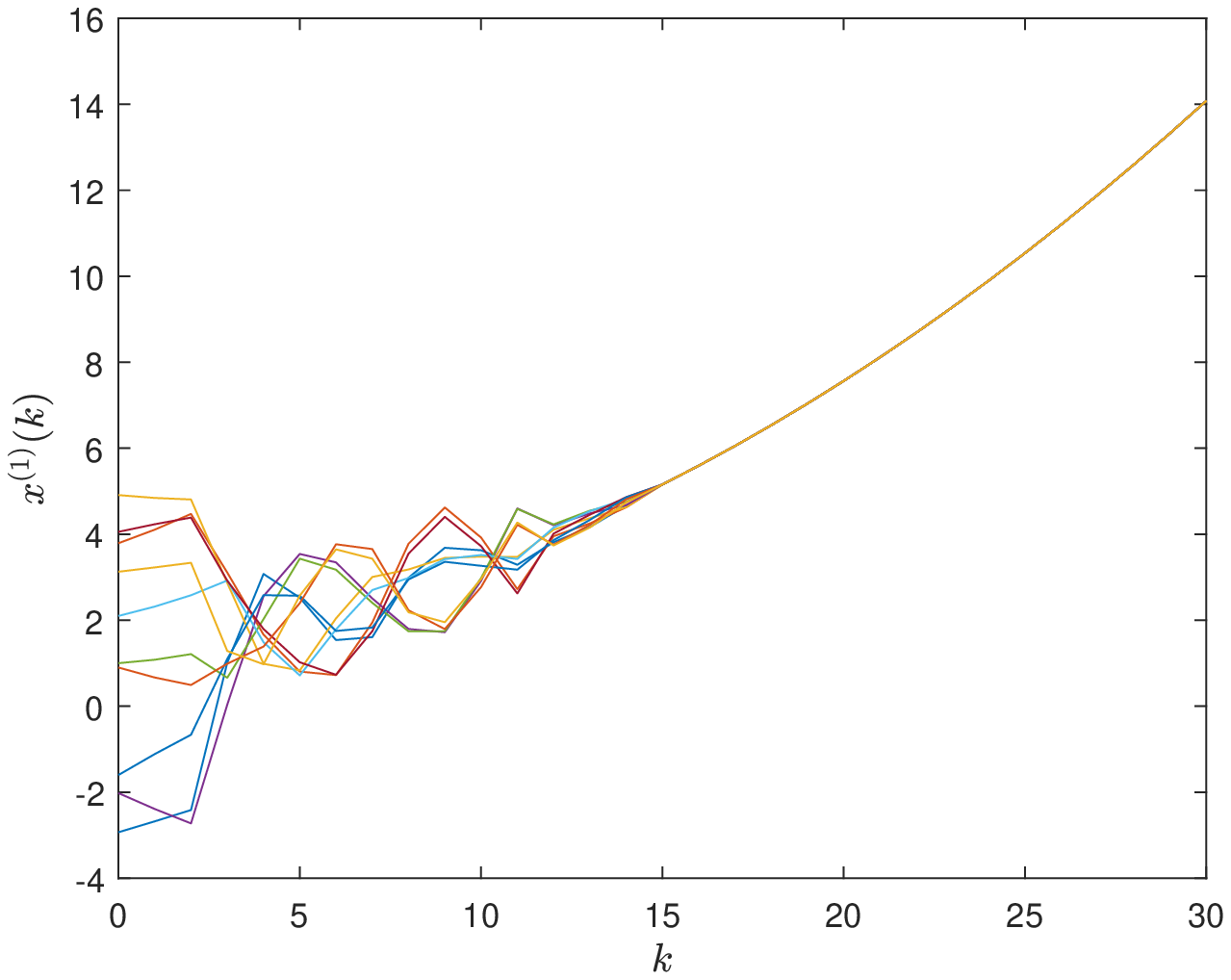}}
			\end{minipage}%
		}
		\subfigure[\scriptsize Velocity]{
			\begin{minipage}[t]{0.33\linewidth}
				\label{fig1:2}
				\centering
				\centerline{\includegraphics[width=5.4cm]{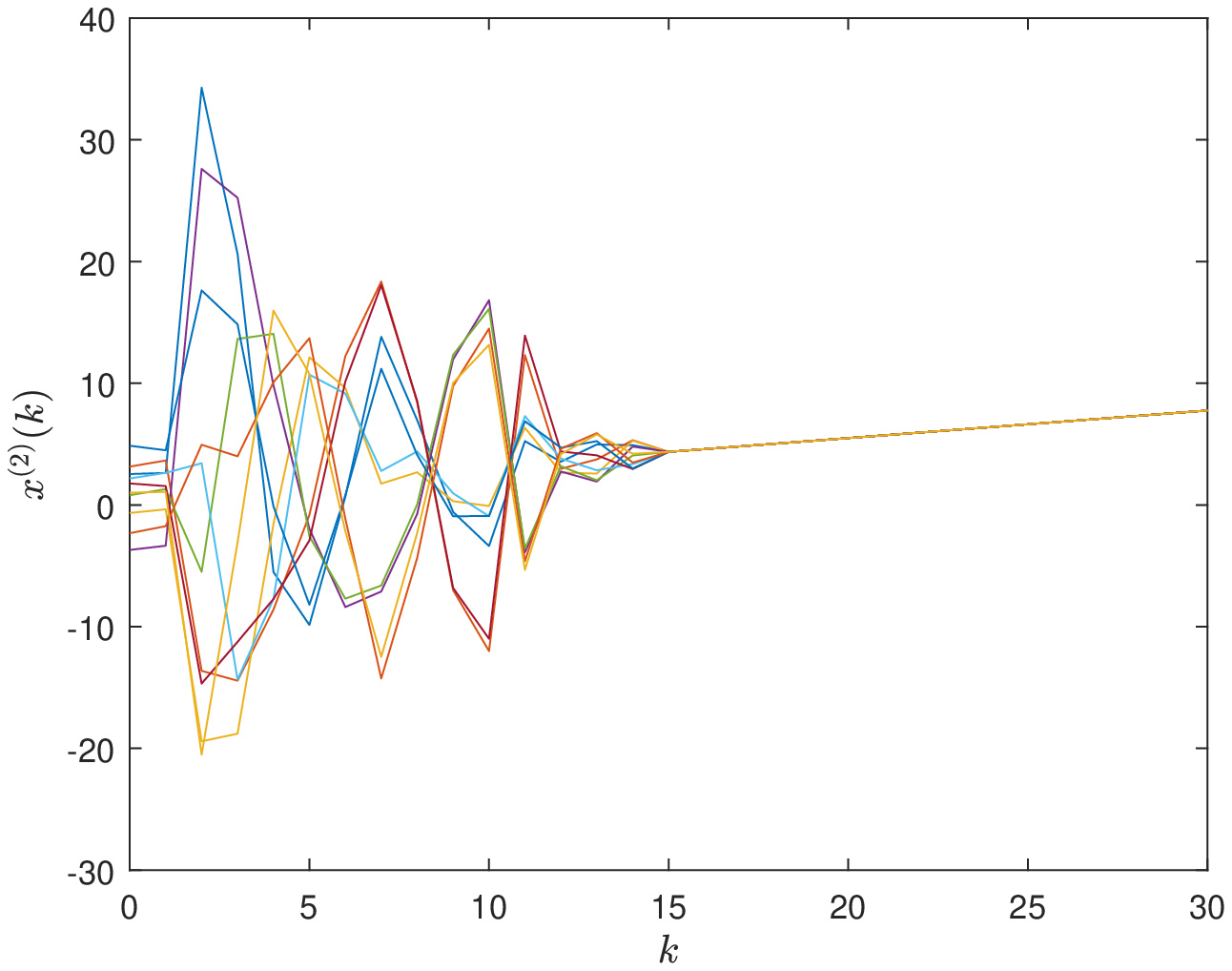}}
			\end{minipage}
		}
		\subfigure[\scriptsize Acceleration]{
			\begin{minipage}[t]{0.33\linewidth}
				\label{fig1:3}
				\centering
				\centerline{\includegraphics[width=5.4cm]{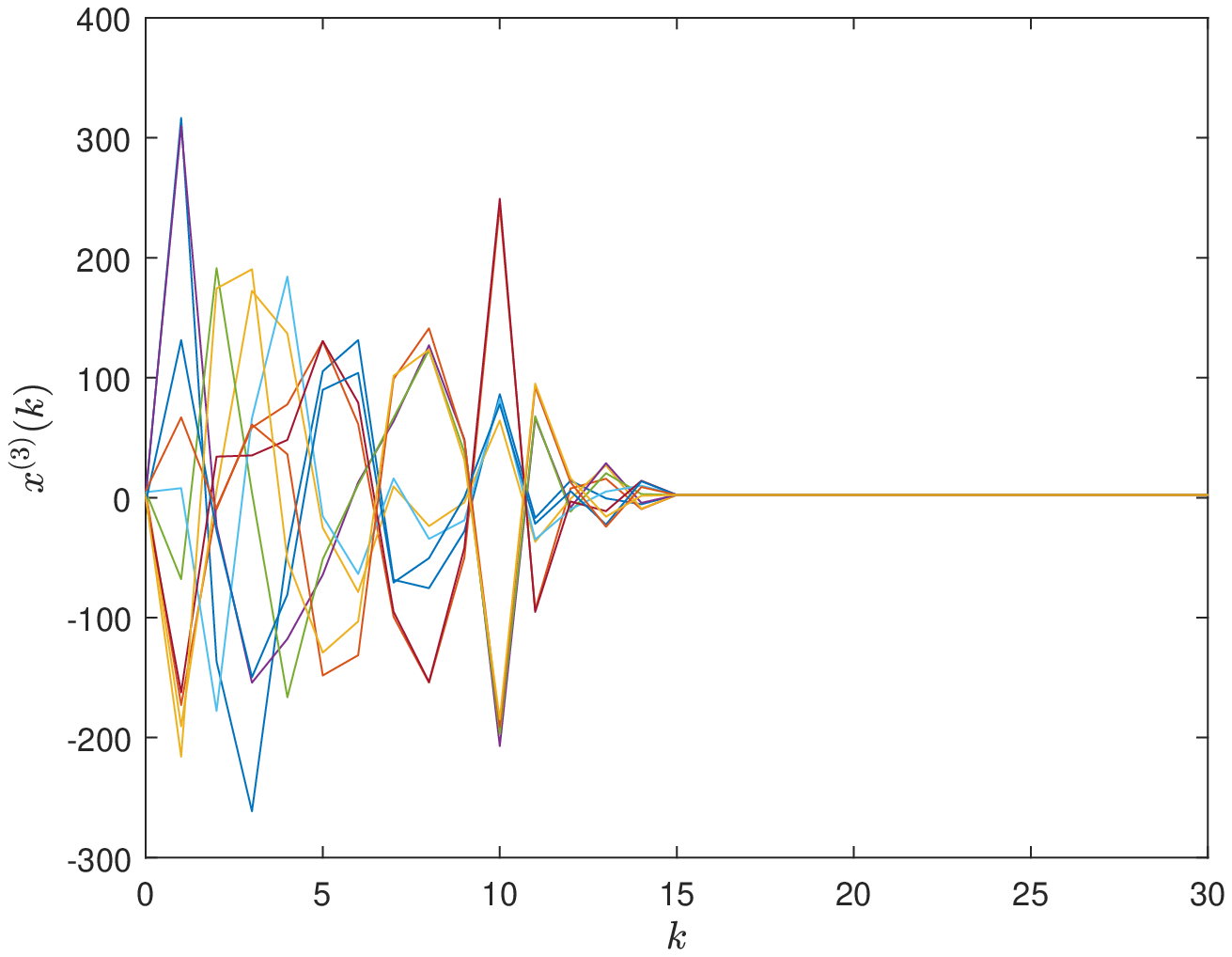}}
			\end{minipage}
		}
		\caption{Finite-time consensus of the third-order MAS}
		\label{}
	\end{figure*}



\section{Conclusions}

The problem of accelerated asymptotic and finite-time consensus of  discrete-time high-order MASs has been studied in this paper.
Firstly, a protocol with constant control gains has been introduced to achieve consensus asymptotically.
The fast consensus problem has been transformed into an optimization problem of convergence rate, and an accelerated consensus algorithm
based on gradient descent has been designed to optimize the convergence rate.
By using the Routh-Hurwitz stability criterion, the lower bound on the convergence rate has been derived, and the necessary condition to achieve this convergence rate has been proposed.
Due to the limitation of constant control, consensus can't be achieved in finite time.
Hence, a protocol with time-varying control gains has been designed to achieve the finite-time consensus.
Explicit formulas for the time-varying control gains and the final consensus state have been given.
Numerical examples have demonstrated the validity and correctness of these results.

\ifCLASSOPTIONcaptionsoff
  \newpage
\fi

\bibliographystyle{IEEEtran}
\bibliography{IEEEabrv,IEEEexample}

\begin{thebibliography}{10}
\providecommand{\url}[1]{#1}
\csname url@samestyle\endcsname
\providecommand{\newblock}{\relax}
\providecommand{\bibinfo}[2]{#2}
\providecommand{\BIBentrySTDinterwordspacing}{\spaceskip=0pt\relax}
\providecommand{\BIBentryALTinterwordstretchfactor}{4}
\providecommand{\BIBentryALTinterwordspacing}{\spaceskip=\fontdimen2\font plus
\BIBentryALTinterwordstretchfactor\fontdimen3\font minus
  \fontdimen4\font\relax}
\providecommand{\BIBforeignlanguage}[2]{{%
\expandafter\ifx\csname l@#1\endcsname\relax
\typeout{** WARNING: IEEEtran.bst: No hyphenation pattern has been}%
\typeout{** loaded for the language `#1'. Using the pattern for}%
\typeout{** the default language instead.}%
\else
\language=\csname l@#1\endcsname
\fi
#2}}
\providecommand{\BIBdecl}{\relax}
\BIBdecl

\bibitem{2013An}
Y.~Cao, W.~Yu, W.~Ren, and G.~Chen, ``An overview of recent progress in the
  study of distributed multi-agent coordination,'' \emph{IEEE Transactions on
  Industrial Informatics}, vol.~9, no.~1, pp. 427--438, 2013.

\bibitem{2016Recent}
J.~Qin, Q.~Ma, Y.~Shi, and L.~Wang, ``Recent advances in consensus of
  multi-agent systems: A brief survey,'' \emph{IEEE Transactions on Industrial
  Electronics}, vol.~64, no.~6, pp. 4972--4983, 2017.

\bibitem{dorri2018multi}
A.~Dorri, S.~S. Kanhere, and R.~Jurdak, ``Multi-agent systems: A survey,''
  \emph{IEEE Access}, vol.~6, pp. 28\,573--28\,593, 2018.

\bibitem{chen2019control}
F.~Chen, W.~Ren \emph{et~al.}, ``On the control of multi-agent systems: A
  survey,'' \emph{Foundations and Trends in Systems and Control}, vol.~6,
  no.~4, pp. 339--499, 2019.

\bibitem{XIAO200465}
L.~Xiao and S.~Boyd, ``Fast linear iterations for distributed averaging,''
  \emph{Systems \& Control Letters}, vol.~53, no.~1, pp. 65--78, 2004.

\bibitem{4627467}
E.~Kokiopoulou and P.~Frossard, ``Polynomial filtering for fast convergence in
  distributed consensus,'' \emph{IEEE Transactions on Signal Processing},
  vol.~57, no.~1, pp. 342--354, 2009.

\bibitem{7389373}
S.~Apers and A.~Sarlette, ``Accelerating consensus by spectral clustering and
  polynomial filters,'' \emph{IEEE Transactions on Control of Network Systems},
  vol.~4, no.~3, pp. 544--554, 2017.

\bibitem{6338354}
E.~Montijano, J.~I. Montijano, and C.~Sagues, ``Chebyshev polynomials in
  distributed consensus applications,'' \emph{IEEE Transactions on Signal
  Processing}, vol.~61, no.~3, pp. 693--706, 2013.

\bibitem{KIBANGOU201419}
A.~Y. Kibangou, ``Step-size sequence design for finite-time average consensus
  in secure wireless sensor networks,'' \emph{Systems \& Control Letters},
  vol.~67, pp. 19--23, 2014.

\bibitem{6876198}
S.~Safavi and U.~A. Khan, ``Revisiting finite-time distributed algorithms via
  successive nulling of eigenvalues,'' \emph{IEEE Signal Processing Letters},
  vol.~22, no.~1, pp. 54--57, 2015.

\bibitem{5411823}
B.~N. Oreshkin, M.~J. Coates, and M.~G. Rabbat, ``Optimization and analysis of
  distributed averaging with short node memory,'' \emph{IEEE Transactions on
  Signal Processing}, vol.~58, no.~5, pp. 2850--2865, 2010.

\bibitem{8716798}
S.~S. Kia, B.~Van~Scoy, J.~Cortes, R.~A. Freeman, K.~M. Lynch, and S.~Martinez,
  ``Tutorial on dynamic average consensus: The problem, its applications, and
  the algorithms,'' \emph{IEEE Control Systems Magazine}, vol.~39, no.~3, pp.
  40--72, 2019.

\bibitem{pasolini2020exploiting}
G.~Pasolini, D.~Dardari, and M.~Kieffer, ``Exploiting the agent's memory in
  asymptotic and finite-time consensus over multi-agent networks,'' \emph{IEEE
  transactions on Signal and Information Processing over Networks}, vol.~6, pp.
  479--490, 2020.

\bibitem{2019Average}
J.-W. Yi, L.~Chai, and J.~Zhang, ``Average consensus by graph filtering: New
  approach, explicit convergence rate, and optimal design,'' \emph{IEEE
  Transactions on Automatic Control}, vol.~65, no.~1, pp. 191--206, 2020.

\bibitem{9763023}
J.~Dai, J.-W. Yi, and L.~Chai, ``Optimal memory scheme for accelerated
  consensus over multi-agent networks,'' \emph{IEEE Transactions on Signal and
  Information Processing over Networks}, vol.~8, pp. 344--352, 2022.

\bibitem{5708233}
W.~{He} and J.~{Cao}, ``Consensus control for high-order multi-agent systems,''
  \emph{IET Control Theory and Applications}, vol.~5, no.~1, pp. 231--238,
  2011.

\bibitem{2011Average}
Q.~Zhang, Y.~Niu, L.~Wang, L.~Shen, and H.~Zhu, ``Average consensus seeking of
  high-order continuous-time multi-agent systems with multiple time-varying
  communication delays,'' \emph{International Journal of Control Automation and
  Systems}, vol.~9, no.~6, pp. 1209--1218, 2011.

\bibitem{2011Distributed}
W.~Yu, G.~Chen, W.~Ren, J.~Kurths, and W.~Zheng, ``Distributed higher order
  consensus protocols in multiagent dynamical systems,'' \emph{IEEE
  Transactions on Circuits and Systems I}, vol.~58, no.~8, pp. 1924--1932,
  2011.

\bibitem{7054482}
H.~{Rezaee} and F.~{Abdollahi}, ``Average consensus over high-order multiagent
  systems,'' \emph{IEEE Transactions on Automatic Control}, vol.~60, no.~11,
  pp. 3047--3052, 2015.

\bibitem{8272377}
A.~{Abdessameud} and A.~{Tayebi}, ``Distributed consensus algorithms for a
  class of high-order multi-agent systems on directed graphs,'' \emph{IEEE
  Transactions on Automatic Control}, vol.~63, no.~10, pp. 3464--3470, 2018.

\bibitem{LI20111706}
S.~Li, H.~Du, and X.~Lin, ``Finite-time consensus algorithm for multi-agent
  systems with double-integrator dynamics,'' \emph{Automatica}, vol.~47, no.~8,
  pp. 1706--1712, 2011.

\bibitem{2011Network}
K.~You and L.~Xie, ``Network topology and communication data rate for
  consensusability of discrete-time multi-agent systems,'' \emph{IEEE
  Transactions on Automatic Control}, vol.~56, no.~10, pp. 2262--2275, 2011.

\bibitem{2014Closed}
A.~Eichler and H.~Werner, ``Closed-form solution for optimal convergence speed
  of multi-agent systems with discrete-time double-integrator dynamics for
  fixed weight ratios,'' \emph{Systems and Control Letters}, vol.~71, pp.
  7--13, 2014.

\bibitem{2018Accelerating}
G.~Parlangeli and M.~E. Valcher, ``Accelerating consensus in high-order
  leader-follower networks,'' \emph{IEEE Control Systems Letters}, vol.~2,
  no.~3, pp. 381--386, 2018.

\bibitem{2004Consensus}
R.~Olfati-Saber and R.~M. Murray, ``Consensus problems in networks of agents
  with switching topology and time-delays,'' \emph{IEEE Transactions on
  Automatic Control}, vol.~49, no.~9, pp. 1520--1533, 2004.

\bibitem{2010Necessary}
C.~Ma and J.~Zhang, ``Necessary and sufficient conditions for consensusability
  of linear multi-agent systems,'' \emph{IEEE Transactions on Automatic
  Control}, vol.~55, no.~5, pp. 1263--1268, 2010.

\bibitem{LI2018144}
L.~Li, M.~Fu, H.~Zhang, and R.~Lu, ``Consensus control for a network of high
  order continuous-time agents with communication delays,'' \emph{Automatica},
  vol.~89, pp. 144--150, 2018.

\bibitem{boyd2004convex}
S.~Boyd and L.~Vandenberghe, \emph{Convex optimization}.\hskip 1em plus 0.5em
  minus 0.4em\relax Cambridge university press, 2004.

\bibitem{1100537}
T.~Chang and C.~Chen, ``On the routh-hurwitz criterion,'' \emph{IEEE
  Transactions on Automatic Control}, vol.~19, no.~3, pp. 250--251, 1974.

\bibitem{mertzios1986generalized}
B.~Mertzios and M.~Christodoulou, ``On the generalized cayley-hamilton
  theorem,'' \emph{IEEE Transactions on Automatic Control}, vol.~31, no.~2, pp.
  156--157, 1986.

\end{thebibliography}

\end{document}